\title{QUASI-NEWTON METHODS FOR MINIMIZING A QUADRATIC FUNCTION
  SUBJECT TO UNCERTAINTY}
\author{Shen PENG\footnotemark\addtocounter{footnote}{-1}
   \and Gianpiero CANESSA\footnotemark\addtocounter{footnote}{-1}
   \and David EK\footnotemark\addtocounter{footnote}{-1}
   \and Anders FORSGREN\thanks{\footAF}}
\def\footAF{Optimization and Systems Theory, Department of
            Mathematics, KTH Royal Institute of Technology, SE-100 44
            Stockholm, Sweden ({\tt
              \{shenp,canessa,daviek,andersf\}@kth.se}).}
\def\calK{{\mathcal K}}
\def\AFcomment#1{{\color{red}\emph{AF: #1}}}
\def\AFtoGCSP#1{\emph{AF$\rightarrow$GC,SP: #1}}
\def\AFtoDE#1{\emph{AF$\rightarrow$DE: #1}}
\newtheorem{remark}{Remark}
\algnewcommand\AND{\textbf{and} }
\begin{document}

\maketitle\thispagestyle{plain}

\begin{abstract}
  We investigate quasi-Newton methods for minimizing a strictly convex
  quad\-ra\-tic function which is subject to errors in the evaluation
  of the gradients. The methods all give identical behavior in exact
  arithmetic, generating minimizers of Krylov subspaces of increasing
  dimensions, thereby having finite termination. A BFGS quasi-Newton
  method is empirically known to behave very well on a quadratic
  problem subject to small errors. We also investigate large-error
  scenarios, in which the expected behavior is not so clear. In
  particular, we are interested in the behavior of quasi-Newton
  matrices that differ from the identity by a low-rank matrix, such as
  a memoryless BFGS method. Our numerical results indicate that for
  large errors, a memory-less quasi-Newton method often outperforms a
  BFGS method. We also consider a more advanced model for generating
  search directions, based on solving a chance-constrained
  optimization problem. Our results indicate that such a model often
  gives a slight advantage in final accuracy, although the
  computational cost is significantly higher.
\end{abstract}

\textbf{Keywords:} Quadratic programming, quasi-Newton method, stochastic quasi-Newton method, chance constrained model  \newline

\section{Introduction}\label{sec-intro}

A strictly convex $n$-dimensional quadratic function may be written on
the form
\[
q(x) = \half x\T H x + c\T x + d,
\]
where $H$ is a positive definite and symmetric $n\times n$-matrix,
$c$ is an $n$-dimensional vector and $d$ is a constant. The
optimization problem of minimizing $q(x)$ is equivalent to solving
$\grad q(x)=0$, i.e., solving the linear equation $Hx+c=0$.

One way to do so by a direct method is to find an initial point $x_0$
and associated gradient $g_0=Hx_0+c$. Then generate $x_k$ and $g_k$,
with $g_k = H x_k +c$, such that $x_k$ is the minimizer of $q(x)$ on
$x_0 + \calK_k(g_0,H)$, where
\[
\calK_0(g_0,H)=\{0\}, \quad \calK_k(g_0,H)=\Span\{g_0,Hg_0,H^2 g_0,\dots,H^{k-1}
g_0\}, \quad k=1,2,\dots.
\]
This is equivalent to $g_k$ being orthogonal to $\calK_k(g_0,H)$, so
that $g_0$, $g_1$, \dots, $g_k$ form an orthogonal basis for
$\calK_{k+1}(g_0,H)$. Since there can be at most $n$ nonzero
orthogonal vectors, there is an $r$, with $r\le n$, such that
$g_r=0$. Consequently, $x_r$ is the minimizer of $q(x)$.

A method for computing $x_1$, $x_2$, \dots, $x_r$ this way is
characterized by the search direction $p_k$ leading from $x_k$ to
$x_{k+1}$. Given $p_k$, the step length $\alpha_k$ is given by
minimizing $q(x_k+\alpha p_k)$, i.e.,
\begin{equation}\label{eqn-exactlinesearch}
\alpha_k = - \frac {g_k^T p_k}{p_k^T H p_k}.
\end{equation}
Therefore, it suffices to characterize $p_k$. One characterization is
given by the conditions that
\begin{subequations} \label{eqn-pkcorrect}
\begin{eqnarray}
& (i) \quad & \mbox{$p_k$ is a linear combination of $g_0$,
$g_1$, $\dots$, $g_k$, in addition to} \\
& (ii) \quad & \mbox{satisfying $g_i^T p_k = -g_k^T g_k$, $i=0,\dots,k,$}
\end{eqnarray}
\end{subequations}
see, e.g., \cite[Lemma 1]{EF21}. There could be an arbitrary scaling $g_i^T
p_k=c_k$ for a nonzero scalar $c_k$. Throughout, we will use
$c_k=-g_k^T g_k$.

The method of conjugate gradients gives a short recursion for the
search direction $p_k$ satisfying (\ref{eqn-pkcorrect}). It may be
written on the form
\begin{equation}\label{eqn-pkCG}
p_k =
\begin{cases}
-g_0, & k = 0, \\
-g_{k} + \frac{g_k^T g_k}{g_{k-1}^T g_{k-1}\drop} p_{k-1}, &
k=1,2,\dots,r-1.
\end{cases}
\end{equation}
See, e.g., \cite[Chapter 6]{Saa95} for an introduction to the method
of conjugate gradients.

An alternative way of computing the search direction $p_k$ satisfying
(\ref{eqn-pkcorrect}) is through a quasi-Newton method, in which $p_k$
is defined by a linear equation $B_k p_k=-g_k$, for $B_k$ a symmetric
positive-definite matrix. A well-known method for which $B_k$ gives
$p_k$ satisfying (\ref{eqn-pkcorrect}) is the BFGS quasi-Newton
method, in which $B_0=I$, and $B_{k}$ is formed by adding a symmetric
rank-2 matrix to $B_{k-1}$. In our setting, the BFGS method may be
viewed as the ``ideal'' update that dynamically transforms $B_k$ from
$I$ to $H$ in $n$ steps. Identity curvature is transformed to $H$
curvature in one dimension at each step, and this information is
maintained throughout. The precise formulas will be given in
Section~\ref{sec-Bk}.

In exact arithmetic, the method of conjugate gradients and a BFGS
quasi-Newton method compute identical iterates in the setting we
consider, minimizing a strictly convex quadratic problem using exact
linesearch. In this situation, the recursion formula for the method of
conjugate gradients is to prefer, since the computational cost for
solving with $B_k$ for the BFGS method increases with $k$.

In finite precision arithmetic, the BFGS quasi-Newton method may still
be expected to compute search directions of very high quality as the
Hessian is approximated on subspaces of higher dimension. This is not
to be expected for the method of conjugate gradients. Our interest is
to study the situation where noise is added to the gradients, thereby
considering a situation with significantly higher level of error than
finite precision arithmetic. In this situation, it is not clear that
the BFGS quasi-Newton method is superior, in the sense that the
Hessian approximation may become inaccurate. It is here also of
interest to consider the method of steepest descent, where the search
direction is the negative of the gradient. For exact arithmetic, the
search directions satisfying (\ref{eqn-pkcorrect}) will outperform
steepest descent due to the property of minimizing the quadratic
objective function over expanding subspaces. In the case of large
noise, this is not at all clear.

The reason for noise in the gradients can be seen in different
perspectives.  Firstly, as mentioned above, the finite precision
arithmetic gives a residual between the evaluated gradients and the
true gradients.  Secondly, in many practical problem, such as
PDE-constrained optimization, the objective function often contains
computational noise created by an inexact linear system solver,
adaptive grids, or other internal computations.  In other cases, the
noise in the gradients can be due to stochastic errors. For example,
when minimizing $q(x) = \mathbb{E}[f(x;\xi)]$, where $\xi$ is a random
variable. With given sample set $\Xi=\{\xi^i, i=1,\cdots,N \}$,
instead of $q(x)$, the following empirical expectation will be
considered:
\[
      \tilde q(x) = \frac{1}{N}\sum_{\xi^i \in \Xi}f(x;\xi^i).
\]
Due to the randomness of samples, $\tilde q(x) = q(x) + \varepsilon$, and $\nabla \tilde q(x) = \nabla q(x) + \epsilon$, where $\varepsilon, \epsilon$ are random noise.

The basis for the methods we consider is that they compute search
directions identical to those of the BFGS method and the method of
conjugate gradients, i.e., satisfying (\ref{eqn-pkcorrect}), in exact
arithmetic. In particular, we are interested in a setting where $B_k$
is the identity matrix plus a symmetric matrix of rank two. We will
refer to such a quasi-Newton matrix as a \emph{low-rank} quasi-Newton
matrix and name the corresponding method a low-rank quasi-Newton
method. Our intention is to investigate the behavior of a low-rank
quasi-Newton method compared to the method of steepest descent,
thereby mimicking a situation where two more vectors in addition to
$g_k$ are used at iteration $k$. The corresponding search direction
can then be computed from a two-by-two system. For comparison, we also
compare to the BFGS quasi-Newton method. Our choice of quadratic
problem allows an environment where the behavior of the methods in
infinite precision is known, and we can study the effect of noise.

In addition, we investigate the potential for improving performance of
the quasi-Newton method by formulating robust optimization problems of
chance-constraint type for computing the search directions. These
methods become of higher interest in the case of large noise and
multiple copies of the gradients. Our interest is to capture the
essence of the behavior, and try to understand the interplay between
quality in computed direction compared to robustness given by the
chance constraints. The computational cost will always be
significantly higher, but our interest is to see if we can gain in
terms of robustness and accuracy of the computed solution.

\subsection{Background and related work}

The paper builds on previous work in the setting of exact arithmetic
and finite precision arithmetic. Forsgren and Odland~\cite{FO18} have
studied exact linesearch quasi-Newton methods for minimizing a
strictly convex quadratic function, and given necessary and sufficient
conditions for a quasi-Newton matrix $B_k$ to generate a search
direction which is parallel to that of (\ref{eqn-pkcorrect}) in exact
arithmetic.
With exact linesearch methods, Ek and Forsgren~\cite{EF21} have
studied certain limited-memory quasi-Newton Hessian approximations for
minimizing a convex quadratic function in the setting of finite
precision arithmetic. Dennis and Walker~\cite{DW84} have considered
the use of bounded-deterioration quasi-Newton methods implemented in
floating-point arithmetic where only inaccurate values are
available. In contrast, our work allows for large noise and we study
performance on a set of test problems.



In the present manuscript, we consider a situation where the function
values and gradients cannot be easily obtained and only noisy
information about the gradient is available.  To handle this
situation, some stochastic methods are proposed to minimize the
objective function with inaccurate information. Our setting is
minimizing a strictly convex quadratic function.

For strongly convex problems, Mokhtari and Ribeiro~\cite{MR13} have
proposed a regularized stochastic BFGS method and analyzed its
convergence, and Mokhtari and Ribeiro~\cite{MR15} have further studied
an online L-BFGS method.  Berahas, Nocedal and Takac~\cite{BNT16} have
considered the stable performance of quasi-Newton updating in the
multi-batch setting, illustrated the behavior of the algorithm and
studied its convergence properties for both the convex and nonconvex
cases.  Byrd~\etal~\cite{BHNS16} have proposed a stochastic
quasi-Newton method in limited memory form through subsampled
Hessian-vector products.  Shi~\etal~\cite{shi2020noisetolerant}
have proposed practical extensions of the BFGS and L-BFGS methods for
nonlinear optimization that are capable of dealing with noise by
employing a new linesearch technique.  More recently, Xie, Byrd and
Nocedal~\cite{XBN20} have considered the convergence analysis of
quasi-Newton methods when there are (bounded) errors in both function
and gradient evaluations, and established conditions under which an
Armijo-Wolfe linesearch on the noisy function yields sufficient
decrease in the true objective function.

Unlike the stochastic quasi-Newton methods, which are based on the
subsampled gradients or Hessians, there are also other stochastic
tools to reduce the effect of noise when generating the search
direction. Lucchi~\etal~\cite{LMH15} have studied quasi-Newton method
by incorporating variance reduction technique to reduce the effect of
noise in Hessian matrices by proposing a variance-reduced stochastic
Newton method.  This method keeps the variance under control in the
use of a multi-stage scheme.  Moritz~\etal~\cite{MNJ16} have proposed a
linearly convergent method that integrates the L-BFGS method to
alleviate the effect of noisy gradients with the variance reduction
technique by adding the residual between subsample gradient and full
gradient to the noisy gradient.

In addition, chance constraint is a natural approach to handle the
effect of random noise \cite{Ahm18}.  Therefore, chance constraint has
the potential to reduce the effect of noise when generating the search
direction.  By integrating chance constraints in the design of
quasi-Newton methods, we investigate the ability to improve robustness
into the computation of the search direction in the presence of noise.



\section{Suggestions on quasi-Newton matrices}\label{sec-Bk}

As mentioned in Section~\ref{sec-intro}, a well-known method for which
$B_k$ gives $p_k$ satisfying (\ref{eqn-pkcorrect}) is the BFGS
quasi-Newton method. In the BFGS method, $B_0=I$ and
\begin{align}
B_k & = B_{k-1} + \frac1{g_{k-1}^T p_{k-1}}
 g_{k-1}g_{k-1}^T  \nonumber \\
& +
\frac1{\alpha_{k-1}(g_k-g_{k-1})^Tp_{k-1}}(g_{k}-g_{k-1})(g_{k}-g_{k-1})^T,
\quad k=1,\dots,r.
\label{eqn-BFGS}
\end{align}
Expansion gives
\begin{align} \nonumber
B_k & = B_{k-1} + \frac1{g_{k-1}^T p_{k-1}}
 g_{k-1}g_{k-1}^T
 +
\frac1{\alpha_{k-1}(g_k-g_{k-1})^Tp_{k-1}}(g_{k}-g_{k-1})(g_{k}-g_{k-1})^T \\
& = I + \sum_{i=0}^{k-1} \frac1{g_i\T p_i\drop} g_i g_i^T +
\sum_{i=0}^{k-1} \frac1{\alpha_l(g_{i+1}-g_i)^T p_i}
(g_{i+1}-g_{i})(g_{i+1}-g_{i})^T.
\label{eqn-BFGSfull}
\end{align}
For the quadratic case with exact linesearch, the BFGS matrix of
(\ref{eqn-BFGSfull}) takes the form
\begin{align}\nonumber
B_k & = B_{k-1} - \frac1{g_{k-1}^T g_{k-1}\drop} g_{k-1}\drop g_{k-1}^T +
\frac1{p_{k-1}^T H p_{k-1}\drop} H p_{k-1}\drop p_{k-1}^T H \\
& = I - \sum_{i=0}^{k-1} \frac1{g_i\T g_i\drop} g_i g_i^T +
\sum_{i=0}^{k-1} \frac1{p_i^T H p_i\drop} H p_i\drop p_i^T H,
\label{eqn-BFGSfullquadratic}
\end{align}
see, e.g., \cite{EF21}. If $n$ steps are taken, then with $P_n = ( p_0
\ p_1 \ \cdots \ p_{n-1})$, it follows that $P_n$ is square and
nonsingular, so that
\begin{align}
H & = H H\inv H =
H P_n P_n\inv H\inv P_n\Tinv P_n^T H  \nonumber \\
& =
H P_n (P_n^T H P_n)\inv P_n^T H =
\sum_{i=0}^{n-1} \frac1{p_i^T H p_i\drop} H p_i\drop p_i^T H,
\label{eqn-H}
\end{align}
where the conjugacy of the $p_i$s is a consequence of
(\ref{eqn-pkcorrect}). In this situation,
(\ref{eqn-BFGSfullquadratic}) may therefore be seen as a dynamic way
of generating the true Hessian in $n$ steps, if the method does not
converge early, as $B_n=H$ by
\[
I - \sum_{i=0}^{n-1} \frac1{g_i\T g_i\drop} g_i\drop g_i^T = 0 \text{and}
\sum_{i=0}^{n-1} \frac1{p_i^T H p_i\drop} H p_i\drop p_i^T H = H.
\]
This is a consequence of the orthogonal gradients then spanning the
whole space in combination with (\ref{eqn-H}). Consequently, the BFGS
method may be viewed as the ``ideal'' update that dynamically
transforms $B_k$ from $I$ to $H$ in $n$ steps. Identity curvature is
transformed to $H$ curvature in one dimension at each step, and this
curvature information is maintained throughout.

The discussion above may be generalized to a general class
of quasi-Newton matrices $B_k$ of the form
\begin{equation}\label{eqn-Bklimmemory}
B_k = V_k +
\sum_{i=0}^{k-1} \rho_i (g_{i+1}-g_i)(g_{i+1}-g_i)^T,
\end{equation}
where $V_k p_k=-g_k$ for $p_k$ satisfying (\ref{eqn-pkcorrect}), and
$\rho_i$, $i=0,\dots,k-1$, are nonnegative scalars. In exact
arithmetic and under exact linesearch, the specific values of
$\rho_i$, $i=0,\dots,k-1$, have no impact on the search direction, due
to (\ref{eqn-pkcorrect}). In a noisy framework, they will make a
difference, and we will pay attention to how they are selected.

As discussed in Section~\ref{sec-intro}, we are particularly
interested in low-rank quasi-Newton matrices. We will therefore
consider a \emph{memoryless BFGS} quasi-Newton method, in which
\begin{equation}\label{eqn-Vkmemoryless}
V_k  = I - \frac1{p_{k-1}^T p_{k-1}\drop} p_{k-1} p_{k-1}^T,
\end{equation}
in addition to $\rho_i=0$, $i=0,\dots,k-2$, so that
\begin{equation}\label{eqn-Bkmemoryless}
B_k = I - \frac1{p_{k-1}^T p_{k-1}\drop} p_{k-1} p_{k-1}^T
+ \rho_{k-1} ( g_k - g_{k-1} ) ( g_k - g_{k-1})^T,
\end{equation}
where the value of $\rho_{k-1}$ is given by the \emph{secant
  condition} $\alpha_{k-1}B_k p_{k-1}= g_k-g_{k-1}$. We denote this
particular value of $\rho_{k-1}$ by $\hat\rho_{k-1}$. For exact
linesearch, $B_k$ of (\ref{eqn-Bkmemoryless}) gives
\begin{equation}\label{eqn-secant}
\hat \rho_{k-1} = -\frac1{\alpha_{k-1}g_{k-1}^Tp_{k-1}\drop}.
\end{equation}
Then $V_k p_k=-g_k$ for $p_k$ satisfying (\ref{eqn-pkcorrect}) in the case of
exact arithmetic, see, e.g., \cite[Proposition 1]{FO18}. The $V_k$ of
the memoryless BFGS matrix given by (\ref{eqn-Vkmemoryless}) is
analogous to the first two terms of the BFGS matrix of
(\ref{eqn-BFGSfullquadratic}), but the matrix is singular with its
nullspace restricted to the one-dimensional span of $p_{k-1}$, as
opposed to the span of all previous gradients. In addition, the
memoryless BFGS matrix $B_k$ of (\ref{eqn-Bkmemoryless}) is nonsingular
as $\rho_{k-1}>0$ and $( g_k - g_{k-1})^T p_{k-1} \ne 0$.

We will also interpret the method of conjugate gradients in a
quasi-Newton framework, by forming the \emph{symmetric CG}
quasi-Newton matrix $B_k$ given by
\begin{equation}\label{eqn-BksymCG}
B_k = \left( I - \frac{1}{g_{k-1}^T p_{k-1}\drop}g_k
    p_{k-1}^T\right)
\left( I - \frac{1}{g_{k-1}^T p_{k-1}\drop} p_{k-1}\drop g_k^T\right).
\end{equation}
This matrix is formed by rewriting the recursion (\ref{eqn-pkCG}) and
making an additional symmetrization, see~\cite{FO18}. It can be put in
the matrix family given by (\ref{eqn-Bklimmemory}) by setting
$V_k=B_k$ and $\rho_i=0$, $i=0,\dots,k-1$.

In summary, we will consider quasi-Newton matrices of the form
(\ref{eqn-Bklimmemory}). In particular, we will consider two specific
low-rank matrices. The quasi-Newton matrices $B_k$ given by
\emph{memoryless BFGS} in (\ref{eqn-Bkmemoryless}) and \emph{symmetric
  CG} in (\ref{eqn-BksymCG}) are both low-rank quasi-Newton matrices
that differ from the identity by a symmetric rank two matrix and
fulfill the conditions we require, (\ref{eqn-pkcorrect}). They will be
used in our computational study.

\section{A chance-constrained model for finding the search
  direction}\label{sec-model}

In addition to investigating the behavior of the quasi-Newton methods
discussed so far, we are also interested in investigating the
potential of increasing the performance of the quasi-Newton methods in
the presence of noise by selecting parameters from a
chance-constrained optimization problem.

The aim is to design a quasi-Newton matrix $B_k$, with $B_k\succ 0$,
so as to compute a search direction of high quality.  For the case of
exact arithmetic, i.e., no noise, our model direction is the direction
$p_k$ that satisfies (\ref{eqn-pkcorrect}). The interest is now to
investigate and design quasi-Newton matrices in the presence of
noise. In particular, we are interested in studying the performance of
different methods for different noise levels. For a given quasi-Newton
matrix $B_k$, the search direction $p_k$ is computed from $B_k p_k =
-g_k$.

  As there exists noise in each iteration, it means that the obtained
  gradient is not accurate, it is the combination of the true
  gradient and some white noise.  The update of search direction may
  result in a non-descent direction because of the influence by the
  noise.  Then, we have the following proposition to show that the
  search direction satisfying $B_k p_k = -g_k$ is a descent direction
  under certain conditions even with noise in each iteration.

\begin{proposition}\label{Th_noise}
  Consider iteration $k$ of a quasi-Newton method for minimizing
  $q(x)$. Let $g_k = \bar{g}_k +
\epsilon$, where $\bar{g}_k$ is the
  true gradient and $\epsilon$ is the noise generated with mean equal
  to $0$.  If $B_k \succ 0$ and $\|\epsilon\| <
  \frac{1}{\|B_k^{-1}g_k\|}g_k^TB_k^{-1}g_k$, the direction $p_k$,
  satisfying $B_k p_k = -g_k$, is a descent direction at point $x_k$.
\end{proposition}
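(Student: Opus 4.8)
The plan is to show that $p_k$ produces a negative directional derivative of the true objective $q$ at $x_k$; since $\bar{g}_k = \nabla q(x_k)$, this amounts to verifying $\bar{g}_k^T p_k < 0$. Because $B_k \succ 0$ it is nonsingular, so the defining relation $B_k p_k = -g_k$ gives the explicit representation $p_k = -B_k^{-1} g_k$, which I would substitute directly into the descent inner product. Observe that the assumed bound is only meaningful when $g_k \neq 0$, so this nondegeneracy is implicitly guaranteed by the hypothesis.

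First I would expand, using $\bar{g}_k = g_k - \epsilon$,
\[
\bar{g}_k^T p_k = -(g_k - \epsilon)^T B_k^{-1} g_k = -g_k^T B_k^{-1} g_k + \epsilon^T B_k^{-1} g_k .
\]
The first term is strictly negative, since $B_k \succ 0$ implies $B_k^{-1} \succ 0$ and $g_k \neq 0$; this is exactly the noiseless contribution that makes $p_k$ a descent direction when $\epsilon = 0$. The remaining work is to control the cross term $\epsilon^T B_k^{-1} g_k$ so that it cannot overwhelm this negative quantity.

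The key step is a single Cauchy--Schwarz estimate, $\epsilon^T B_k^{-1} g_k \le \|\epsilon\|\,\|B_k^{-1} g_k\|$. Combining this with the hypothesis $\|\epsilon\| < g_k^T B_k^{-1} g_k / \|B_k^{-1} g_k\|$ yields $\epsilon^T B_k^{-1} g_k < g_k^T B_k^{-1} g_k$, and hence $\bar{g}_k^T p_k < 0$, as required. I do not expect any genuine obstacle here: the argument reduces to this one inequality. The only points deserving care are interpretational rather than technical --- the descent property must be measured against the true gradient $\bar{g}_k$ rather than the noisy $g_k$, and one should note that the threshold $g_k^T B_k^{-1} g_k / \|B_k^{-1} g_k\|$ is strictly positive (again because $B_k^{-1} \succ 0$), so that the assumed bound on $\|\epsilon\|$ is non-vacuous.
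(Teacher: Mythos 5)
Your proof is correct and follows essentially the same route as the paper's: expand $\bar{g}_k^T p_k = -g_k^T B_k^{-1} g_k + \epsilon^T B_k^{-1} g_k$ and bound the cross term by Cauchy--Schwarz against the hypothesis on $\|\epsilon\|$. Your added observations (that $g_k \neq 0$ is implicit and that the threshold is strictly positive) are sensible clarifications but do not change the argument.
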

\begin{proof}
  The direction $p_k$ is a descent direction if $\bar{g}_k^Tp_k < 0$.
  Since $B_k p_k = -g_k$, we have
  \[
  \bar{g}_k^Tp_k = (g_k - \epsilon)^Tp_k = -(g_k - \epsilon)^TB_k^{-1}g_k
  = -g_k^TB_k^{-1}g_k + \epsilon^TB_k^{-1}g_k.
  \]
  Hence, $p_k$ is a descent direction if $ g_k^TB_k^{-1}g_k - \epsilon^TB_k^{-1}g_k > 0 $.

  As $ g_k^TB_k^{-1}g_k - \epsilon^TB_k^{-1}g_k \geq g_k^TB_k^{-1}g_k - \|\epsilon\|\|B_k^{-1}g_k\| $,
  $ \|\epsilon\| < \frac{1}{\|B_k^{-1}g_k\|}g_k^TB_k^{-1}g_k$
  implies $p_k$ is a descent direction.
  This concludes the proof.
\end{proof}

A consequence is that the property of $p_k$ being a descent direction
may be lost if the termination criteria on $\norm{\tilde{g}_k}$ is set smaller
  than the noise level, as observed in the following remark.

\begin{remark}\label{Remark_m1}
  Proposition \ref{Th_noise} shows that when the noise $\epsilon$ is
  small enough, the direction satisfying $B_k p_k = -g_k$ is a descent
  direction.  However, if the noise $\epsilon$ is large, the direction
  obtained from $B_k p_k = -g_k$ may not be a descent direction.  In
  particular, let $\tau>0$ denote the tolerance level of the stopping
  criterion based on $\norm{g_k}$.  Consider the situation when
  $\|\epsilon\| > \tau$.  If $x_k$ is close to termination solution,
  the value of $\|g_k\|$ is close to $\tau$.  In this case, we could
  have $ \tau < \|g_k\| \leq \|\epsilon\|$.  Since
  $\frac{1}{\|B_k^{-1}g_k\|}g_k^TB_k^{-1}g_k \leq
  \frac{1}{\|B_k^{-1}g_k\|}\|g_k\| \|B_k^{-1}g_k \| = \|g_k\| \leq
  \|\epsilon\|$, the conditions in Proposition \ref{Th_noise} can not
  hold.  Therefore, the direction satisfying $B_k p_k = -g_k$ may be
  not a descent direction anymore.
\end{remark}

In our quasi-Newton setting, the aim is not only to generate descent
directions, but also to generate search directions of high
quality. Suppose $r_k(p_k)$ is a quality measure for the search
direction $p_k$ at iteration $k$.  The aim is to minimize the quality
measure $r_k(p_k)$ such that $ B_k p_k = -g_k, B_k \succ 0$.
Therefore, for a given quality measure $r_k(p_k)$, we can generate a
search direction with highest quality by solving the following
optimization problem:
\begin{equation} \label{form:m0}
\begin{array}{ll}
\minimize{p_k, B_k} & { r_k(p_k)  }\\
 \subject & B_k p_k = -g_k, \\
 & B_k \succ 0.
\end{array}
\end{equation}
In addition, we typically require $B_k$ to have some additional
properties, as problem (\ref{form:m0}) becomes highly complex
otherwise.

The model \eqref{form:m0} is actually a deterministic model, where the
noisy gradients are deterministic.
However, as mentioned in Section \ref{sec-intro}, in some practical problems, the
gradients themselves are random because of the randomness in
the original objective quadratic function.  Therefore, it is more
natural to view the gradients as random vectors in model
\eqref{form:m0}.
At iteration $k$, it is assumed that $\tilde{g}_k = \bar{g}_k +
\tilde{\epsilon}$, where $\tilde{\epsilon}$ is a random noise with mean equal to $0$.
Hence, considering the randomness and to overcome the
shortcoming of model \eqref{form:m0} as mentioned in Remark
\ref{Remark_m1}, the model \eqref{form:m0} can be formulated as the
following chance constrained model:
\begin{equation}\label{form:m0_CC} 
\begin{array}{lll}
\minimize{t, p_k, B_k} & t   \\
\subject & \mathbb{P}\left\{
  r_k(p_k) \leq t, B_k p_k = -\tilde{g}_k, B_k \succ 0
\right\} \geq 1-\beta,
\end{array}
\end{equation}
where $\beta \in (0,1)$ is a given probability level. The value of $\beta$ indicates the risk-aversion of the decision maker, where 0 is the most conservative approach as we need to comply with the supremum value of the underlying random vector, while higher values would make our solution averse to risk. Even small values of $\beta$ can have significant impact to the results \cite{Bar16}, therefore studying the behaviour of the solution while $\beta$ is 0 and close to 0 (typically 0.01 or 0.05) is the usual approach. The chance
constraint in problem \eqref{form:m0_CC} not only guarantees the
validation of quasi-Newton setting with probability at least
$1-\beta$, but also controls the quality measure of the search direction.

To show that the search direction obtained by solving problem
\eqref{form:m0_CC} is a descent direction with a high probability, we
have the following proposition.

\begin{proposition}\label{Th_noiseCC}
Denote $\tilde{g}_k = \bar{g}_k + \tilde{\epsilon}$, where $\bar{g}_k$ is the true gradient and $\tilde{\epsilon}$ is a random noise with mean equal to $0$.
If $B_k$ is invertible and $\mathbb{P}\left\{\|\tilde{\epsilon}\| <  \frac{1}{\|B_k^{-1}\tilde{g}_k\|}\tilde{g}_k^TB_k^{-1}\tilde{g}_k\right\} \geq 1 - \alpha$, the direction $p_k$ satisfying the chance constraint in problem \eqref{form:m0_CC} is a descent direction at point $x_k$ with probability at least $\max\{1-\alpha -\beta,0\}$.
\end{proposition}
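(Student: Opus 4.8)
The plan is to reduce the statement to the deterministic result of Proposition~\ref{Th_noise} and then combine the two probabilistic hypotheses by a Bonferroni-type union bound. First I would introduce two events on the underlying probability space (both governed by the single source of randomness $\tilde{\epsilon}$). Let $A$ denote the event
\[
\|\tilde{\epsilon}\| < \frac{1}{\|B_k^{-1}\tilde{g}_k\|}\tilde{g}_k^T B_k^{-1}\tilde{g}_k,
\]
for which the hypothesis directly gives $\mathbb{P}(A)\ge 1-\alpha$. Let $B$ denote the joint event that the conditions inside the chance constraint hold, namely $r_k(p_k)\le t$, $B_k p_k=-\tilde{g}_k$, and $B_k\succ 0$; since $p_k$ satisfies the chance constraint of problem~\eqref{form:m0_CC}, we have $\mathbb{P}(B)\ge 1-\beta$.

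The key observation is that on the intersection $A\cap B$ the search direction $p_k$ is a descent direction at $x_k$. Indeed, on $B$ we have $B_k\succ 0$ together with $B_k p_k=-\tilde{g}_k$, while on $A$ the noise norm satisfies exactly the bound required in the hypothesis of Proposition~\ref{Th_noise}. Applying that proposition verbatim, with $\tilde{g}_k$ and $\tilde{\epsilon}$ in the roles of $g_k$ and $\epsilon$, then yields $\bar{g}_k^T p_k<0$. Hence the event that $p_k$ is a descent direction contains $A\cap B$, and it suffices to lower-bound $\mathbb{P}(A\cap B)$.

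Finally I would invoke the Bonferroni inequality $\mathbb{P}(A\cap B)\ge \mathbb{P}(A)+\mathbb{P}(B)-1$, which follows at once from $\mathbb{P}(A\cup B)\le 1$. Substituting the two hypotheses gives
\[
\mathbb{P}(A\cap B)\ge (1-\alpha)+(1-\beta)-1 = 1-\alpha-\beta,
\]
and since any probability is nonnegative we conclude that $p_k$ is a descent direction with probability at least $\max\{1-\alpha-\beta,0\}$. I do not anticipate a genuine obstacle; the argument is essentially a clean union bound layered on top of the already-established Proposition~\ref{Th_noise}. The only points requiring mild care are ensuring the descent property is stated with respect to the \emph{true} gradient $\bar{g}_k$ and that $A$ and $B$ live on a common probability space so that the Bonferroni bound legitimately applies, both of which are immediate here.
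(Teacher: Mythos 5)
Your proposal is correct and follows essentially the same route as the paper: reduce to Proposition~\ref{Th_noise} on the intersection of the noise-bound event and the chance-constraint event, then apply the Fr\'echet/Bonferroni bound $\mathbb{P}(A\cap B)\ge\max\{\mathbb{P}(A)+\mathbb{P}(B)-1,0\}$. The only cosmetic difference is that the paper first drops the $r_k(p_k)\le t$ condition before bounding, whereas you keep it inside your event $B$; the logical content is identical.
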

\begin{proof}
  From Proposition \ref{Th_noise}, we have that the direction $p_k$ is a descent direction if the following constraints are satisfied:
   \[
   B_k p_k = -\tilde{g}_k,
   B_k \succ 0.
   \]
Then, we have
\begin{eqnarray*}
   && \mathbb{P}\left\{
\begin{array}{l}
  B_k p_k = -\tilde{g}_k,
  B_k \succ 0,\\
  \|\epsilon\| <  \frac{1}{\|B_k^{-1}\tilde{g}_k\|}\tilde{g}_k^TB_k^{-1}\tilde{g}_k
\end{array}
\right\} \\
    &\geq&\mathbb{P}\left\{
\begin{array}{l}
  r_k(p_k) \leq t, B_k p_k = -\tilde{g}_k, B_k \succ 0,\\
  \|\epsilon\| <  \frac{1}{\|B_k^{-1}\tilde{g}_k\|}\tilde{g}_k^TB_k^{-1}\tilde{g}_k
\end{array}
\right\} \\
   &\geq& \max\Bigg\{\mathbb{P}\left\{
  r_k(p_k) \leq t, B_k p_k = -\tilde{g}_k, B_k \succ 0
\right\}
\\
&& + \mathbb{P}\left\{\|\epsilon\| <  \frac{1}{\|B_k^{-1}\tilde{g}_k\|}\tilde{g}_k^TB_k^{-1}\tilde{g}_k\right\} - 1,0 \Bigg\} \\
&\geq & \max\{1-\alpha-\beta,0\}.
\end{eqnarray*}
The second inequality comes from Fr\'echet inequality.
Therefore, the conclusion can be obtained.
\end{proof}

In contrast to the deterministic case, we can still maintain a descent
direction with positive probability even for $\norm{\tilde{g}_k}< \epsilon$,
as observed in the following remark.

\begin{remark}\label{Remark_m2}
From Proposition \ref{Th_noiseCC}, we can observe that even $x_k$ is close to the optimal solution and the value of $\|\tilde{g}_k\|$ is small, there still exists a constant $0<\bar{\alpha}<1$ such that $\mathbb{P}\left\{\|\epsilon\| <  \frac{1}{\|B_k^{-1}\tilde{g}_k\|}\tilde{g}_k^TB_k^{-1}\tilde{g}_k\right\} \geq 1 - \bar{\alpha}$.
This implies that problem \eqref{form:m0_CC} can always provide a descent direction $p_k$ with some probability no matter how close $x_k$ is to the optimal solution.
\end{remark}

A chance-constrained model is expected to be computationally
expensive. Our interest is to study its behavior, in particular to
see how such an approach might be able to achieve improved accuracy,
as indicated by Remark~\ref{Remark_m2}.

\subsection{Suggestion on quality measure}

With these propositions concerning the search direction, an interesting
issue is how to determine the quality measure of the search direction
$p_k$ when the method is applied to an unconstrained quadratic
optimization problem.  We suggest an approach based on the
characterization given in \eqref{eqn-pkcorrect}. In the exact
arithmetic case, \eqref{eqn-pkcorrect} gives $(g_{i+1}-g_i)^T p_k =
0$, $i=0,\dots,k-1$. Therefore, in this situation, the desired $p_k$
would give a global minimum zero with respect to the measure
\begin{equation}\label{cg-objective}
r_k(p_k) = \sum_{i=1}^{k-1}\left( (g_{i+1}- g_i)^T p_k \right)^2.
\end{equation}
In the exact arithmetic case, $p_k$ is orthogonal to the affine span
of the generated gradients. In the noisy setting, we will use this
measure to show how close the direction is to the characterization in
\eqref{eqn-pkcorrect}.


Motivated by the discussion above, at iteration $k$, for a
given positive semidefinite matrix $V_k$, 
we will consider quasi-Newton matrices in the family given by
(\ref{eqn-Bklimmemory}) and wish to find $\rho$ as the solution of
\begin{equation} \tag{D} \label{form:m1}
\begin{array}{ll}
\minimize{\rho, p_k, B_k\succ 0} & \disp{ \sum_{i=0}^{k-1}
  ((g_{i+1}- g_i)^T p_k )^2  }\\
 \subject & B_k p_k = -g_k, \\
 & B_k = V_k +  \sum_{i=0}^{k-1} \rho_{i} (g_{i+1}-g_i)(g_{i+1}-g_i)^T.
\end{array}
\end{equation}
For the given $V_k$, in exact arithmetic and under exact linesearch,
the specific values of $\rho$ have no impact on the search
direction. However, the specific values may ensure nonsingularity of
$B_k$ and numerical stability. Note also that the sum of rank-one
matrices in (\ref{eqn-Bklimmemory}) is similar to terms present in the
BFGS Hessian approximation of (\ref{eqn-BFGSfull}).

Considering the possible randomness in the gradient $\tilde{g}_k$, the chance constrained model \eqref{form:m0_CC} should take equations \eqref{eqn-Bklimmemory} and \eqref{cg-objective} into consideration.
In addition, from \eqref{eqn-BksymCG}, we can notice that the matrix $V_k$ can be dependent on the noisy gradient $\tilde{g}_k$ in some cases, which implies that $V_k$ is random.
Therefore, in the random case, we denote
\begin{equation}\label{eqn-Bklimmemory_Random}
B_k = \tilde{V}_k +  \sum_{i=0}^{k-2} \rho_{i} (g_{i+1}-g_i)(g_{i+1}-g_i)^T
 +\rho_{k-1} (\tilde{g}_k-g_{k-1})(\tilde{g}_k-g_{k-1})^T,
\end{equation}
where $\tilde{V}_k$ is a random matrix.  Assume that the gradients
$g_i, i=0,\dots, k-2$, have been evaluated, which can be the realized
values of noisy gradients or average values of noisy gradient samples.
Then, based on model \eqref{form:m1}, the chance constrained model for
finding the search direction, associated with model
\eqref{form:m0_CC}, can be formulated as
\begin{equation}\label{form:mc} \tag{C}
\begin{array}{lll}
\minimize{t,\rho, p_k, B_k \succ 0} & \disp\sum_{i=0}^{k-1}t_i^2   \\
\subject & \mathbb{P}\left\{
\begin{array}{l}
  -t_i \leq (g_{i+1}- g_i)^T p_k \leq t_i, \ i = 0,\dots,k-2, \\
  -t_{k-1} \leq (\tilde{g}_k- g_{k-1})^T p_k \leq t_{k-1},\\
   B_k p_k = -\tilde{g}_k, \\
B_k = \tilde{V}_k +  \sum_{i=0}^{k-2} \rho_{i} (g_{i+1}-g_i)(g_{i+1}-g_i)^T \\
\phantom{B_k =}
 +\rho_{k-1} (\tilde{g}_k-g_{k-1})(\tilde{g}_k-g_{k-1})^T
\end{array}
\right\} \geq 1-\beta.
\end{array}
\end{equation}
In model \eqref{form:mc}, only $\tilde{g}_k$ is the random vector, while $g_i, i=0,\dots, k-2$, are constant values.


Propositions~\ref{Th_noise} and~\ref{Th_noiseCC} guarantee that the search directions obtained by solving model \eqref{form:m1} and model \eqref{form:mc} are descent directions under certain conditions, respectively.

\section{Reformulation for the low-rank quasi-Newton setting}\label{sec-sto}

Models \eqref{form:m1} and \eqref{form:mc} are hard to solve given the
non-convex nature of the equality-constraints created by the condition
$B_k p_k=-g_k$ and the associated condition on $B_k$, and the nature
of the chance constraints. In the low-rank setting, however, the
constraints related to the quasi-Newton matrix can be simplified significantly.

In our low-rank setting, the only unknown parameter is for the
memoryless BFGS matrix, where we may write
\[
 B_k = V_k +  \rho (g_k-g_{k-1})(g_k-g_{k-1})^T,
\]
and treat $\rho$ as a variable. For this case, we may allow further
simplification by circumventing the possible singularity of $V_k$ by
letting $\hat\rho_{k-1}$ be the value given by the secant condition
(\ref{eqn-secant}), and writing
\begin{equation}\label{eqn-Bk}
B_k  = \hat B_k + (\rho-\hat\rho_{k-1})  ( g_k - g_{k-1} ) ( g_k - g_{k-1})^T,
\end{equation}
for
\begin{equation}\label{eqn-barBk}
\hat B_k = V_k +  \hat\rho_{k-1} ( g_k - g_{k-1} ) ( g_k - g_{k-1} )^T.
\end{equation}
For the memoryless BFGS matrix, it holds that $V_k$ is positive
semidefinite with at most one zero eigenvalue in addition to $V_k
(g_k-g_{k-1})\ne 0$, so that $\hat B_k \succ 0$ as $\hat\rho_{k-1} >
0$. The point of introducing $\hat\rho_{k-1}$ and $\hat B_k$ is to give a
nonsingular and positive definite $\hat B_k$, which may be used as a
foundation for optimizing over $\rho$. We may therefore view the
optimization over $\rho$ as the potential for improving over utilizing
the secant condition.


For a $\hat\rho_{k-1}$ such that $\hat B_k \succ 0$ in (\ref{eqn-Bk}), the
Sherman-Morrison formula gives
\begin{equation}\label{eqn-Bkinv}
B_k\inv = \hat B_k^{-1} + \gamma
 \hat B_k^{-1} ( g_k - g_{k-1} ) ( g_k - g_{k-1} )^T\hat B_k^{-1}
\end{equation}
for
\begin{equation}\label{eqn-gamma}
\gamma=-\frac{(\rho-\hat\rho_{k-1})}{1+(\rho-\hat\rho_{k-1})\left(
    g_k - g_{k-1} \right)^T\hat B_k^{-1}\left( g_k - g_{k-1} \right)},
\end{equation}
so that an explicit expression for $p_k$ may be given as
\[
p_k = -B_k^{-1}g_k
 =-\hat B_k^{-1}g_k - \gamma ( g_k - g_{k-1})^T\hat B_k^{-1} g_k \hat B_k^{-1}
 ( g_k - g_{k-1} ).
\]
Note that there is a one-to-one correspondence between $\gamma$ and
$\rho$ as (\ref{eqn-gamma}) gives
\begin{equation}\label{eqn-rho}
\rho-\hat\rho_{k-1} =-\frac{\gamma}{1+\gamma\left(
    g_k - g_{k-1} \right)^T\hat B_k^{-1}\left( g_k - g_{k-1} \right)}.
\end{equation}
In addition, if $\hat B_k\succ0$, then (\ref{eqn-gamma}) and
(\ref{eqn-rho}) show that $B_k\succ 0$ if and only if the equivalent
conditions
\[
\gamma> -\frac1{(g_k - g_{k-1})^T\hat B_k^{-1}( g_k - g_{k-1} )}
\text{and}
\rho-\bar\rho> -\frac1{(g_k - g_{k-1})^T\hat B_k^{-1}( g_k - g_{k-1} )}
\]
hold. This is a consequence of these lower bounds defining an interval
around $\hat B_k$ and $\hat B_k\inv$ respectively, where $B_k$ and
$B_k\inv$ are well defined.

Summarizing, we may formulate the simplified problem as
\begin{equation}\label{form:P}\tag{DS}
\begin{array}{ll}
\minimize{\gamma, p_k}& \disp\sum_{i=0}^{k-1} ((g_{i+1}- g_i)^T p_k )^2 \\
\subject & p_k =-\hat B_k^{-1}g_k
 -\gamma (g_k - g_{k-1} )^T\hat B_k^{-1}
 g_k\hat B_k^{-1} ( g_k - g_{k-1} ), \\[1mm]
&\gamma >
  -\frac{1}{\left( g_k - g_{k-1} \right)^T \hat B_k^{-1}\left( g_k - g_{k-1}
    \right)},
\end{array}
\end{equation}
which is a convex constrained quadratic program if a tolerance is
introduced for the strict lower bound on $\gamma$. For this problem,
we may eliminate $p_k$ to get one variable only, $\gamma$. Note the
one-to-one correspondence given by (\ref{eqn-rho}) which allows us to
recover $\rho$ from $\gamma$.

For the chance-constrained model \eqref{form:mc}, analogous to
\eqref{eqn-Bklimmemory_Random}, we denote
\begin{equation}\label{eqn-Bk_Random}
B_k  = \tilde B_k + (\rho-\hat\rho_{k-1})  ( \tilde g_k - g_{k-1} ) ( \tilde
g_k - g_{k-1})^T,
\end{equation}
for
\begin{equation}\label{eqn-barBk_Random}
\tilde B_k = V_k +  \hat\rho_{k-1} ( \tilde g_k - g_{k-1} ) ( \tilde
g_k - g_{k-1} )^T,
\end{equation}
which is random due to the randomness of $\tilde g_k$.
$V_k$ is deterministic as defined in \eqref{eqn-Vkmemoryless}.
Then, analogous
simplification and reformulation of (\ref{form:mc}) gives
\begin{equation}\label{form:ppb-sc}\tag{CS}
\begin{array}{llll}
        \minimize{t,\gamma,p_k} & \disp\sum_{i=0}^{k-1} t_i^2  \\
         \subject & \disp\mathbb{P} \left\{
        \begin{array}{l}
        -t_i \le (g_{i+1}- g_i)^T p_k \le t_i, \ i = 0,\dots,k-2, \\ [1mm]
        -t_{k-1} \le (\tilde{g}_k- g_{k-1})^T p_k \le t_{k-1}, \\
p_k =-\tilde B_k^{-1}\tilde{g}_k \\
\phantom{p_k =}
 -\gamma ( \tilde{g}_k - g_{k-1} )^T\tilde B_k^{-1}
 \tilde{g}_k\tilde B_k^{-1} ( \tilde{g}_k - g_{k-1} ), \\[1mm]
\disp \gamma >
  -\frac{1}{\left( \tilde{g}_k - g_{k-1} \right)^T \tilde B_k^{-1}\left( \tilde{g}_k - g_{k-1}
    \right)}
        \end{array}
        \right\} \ge 1-\beta, \\
\end{array}
\end{equation}




Our proposed model \eqref{form:ppb-sc} can be read as follows: the
obtained solution $\gamma$, which will be transformed to $\rho$ by
(\ref{eqn-rho}), will have a probability of at least $1-\beta$ to
obtain a direction $p_k$ from $\tilde{g}_k$ that will be a descent
direction. This means our approach obtains a value of $\gamma$ robust
enough to point us in a descent direction $p_k$ with a probability
bounded given the probabilistic nature of the gradient.

Chance constrained models are often non-convex in general and hard to
solve \cite{SDR14}. However, different equivalent formulations can be
applied to obtain an analytical solution or approximate the chance
constraints.
In general, it is often difficult to get an analytical solution, since it always requires strict assumptions on the probability distribution of random variables and the structure of chance constraints, which making the situation specific and not general enough.
In constrast to the analytical solution, sample average approximation and scenario approximation are two general approaches without much assumptions on the random variables, which will be applied to solve the chance constrained model in the following sections.

\subsection{Deterministic equivalent formulation}

Model \eqref{form:ppb-sc} can not be solved directly in its current state, as it is not a deterministic problem.
Therefore, a sample average approximation (SAA) approach is proposed, given its flexibility to work under any type of stochastic variables. The first step is to formulate it as a deterministic problem that approximates the solution of \eqref{form:ppb-sc} and that can be solved by some solvers. Let $\Omega$ be the set of sample, $|\Omega|=S$, $\tilde{g}_{k\omega}, \omega \in \Omega$ be the i.i.d. noisy gradient samples, $\delta>0$ a sufficiently small real number and $0 \le K < k$ be an integer number that represent the time window to be considered in the model. All samples have a probability of $1/S$. Then, a deterministic equivalent formulation of \eqref{form:ppb-sc} using sample average approximation (SAA) is as follows:

\begin{equation}\label{form:csa}\tag{CSA}
\begin{array}{lll}
        \minimize{}  & \sum_{i=\max(0,k-K)}^{k-1} t_i^2  \\
         \subject     &
        (g_{i+1}- g_i)^T p_{k\omega} \ge -t_i - Mz_\omega,  \ i \in
        I_K, \ \omega \in \Omega,\\
       & (g_{i+1}- g_i)^T p_{k\omega} \le t_i + Mz_\omega, \ i \in
       I_K, \ \omega \in \Omega,\\
       & -t_{k-1}- Mz_\omega \le (\tilde{g}_{k\omega}- g_{k-1})^T
       p_{k\omega} \le t_{k-1} + Mz_\omega, \ \omega \in \Omega, \\
&p_{k\omega} =-\tilde B_{k\omega}^{-1}\tilde{g}_{k\omega} \\
&\phantom{p_{k\omega} =}
 -\gamma ( \tilde{g}_{k\omega} - g_{k-1} )^T\tilde B_{k\omega}^{-1}
 \tilde{g}_{k\omega}\tilde B_{k\omega}^{-1} ( \tilde{g}_{k\omega} -
 g_{k-1} ), \ \omega \in \Omega,\\[1mm]
&\disp \gamma + Mz_\omega \ge
  -\frac{1}{\left( \tilde{g}_{k\omega} - g_{k-1} \right)^T \tilde B_{k\omega}^{-1}\left( \tilde{g}_{k\omega} - g_{k-1}
    \right)} + \delta, \ \omega \in \Omega,\\
    & \sum_{\omega \in \Omega} z_\omega \leq \lfloor S\beta \rfloor,\\
            & z_\omega \in \{0,1\}, \ \omega \in \Omega,
\end{array}
\end{equation}
where $I_K = \{\max(0,k-K),\dots,k-2\}$ and $K \le k$ indicates the
number of gradients to be considered in the model. Once we obtain the
optimal value of $\gamma$, then $\rho_k$ is approximated by

\begin{equation}\label{eqn-rhocsa}
\rho_{k-1} =\frac{\gamma}{1+\gamma\left(\bar{g}_k - g_{k-1} \right)^T \bar{B}_k^{-1} \left(\bar{g}_k - g_{k-1} \right)},
\end{equation}
where $\bar{g}_k = \frac{1}{S}\sum_{\omega \in \Omega}g_{k\omega}$ and
$\bar{B}_k$ is obtained by replacing $\tilde{g}_k$ in
\eqref{eqn-barBk_Random} with $\bar{g}_k$. Finally, $p_k$ is obtained
using the equation
\begin{equation}\label{eqn-pkcsa}
p_k = -B_k^{-1}\bar{g}_k
 =-\bar B_k^{-1}\bar{g}_k - \gamma ( \bar{g}_k - g_{k-1})^T\bar B_k^{-1} \bar{g}_k \bar B_k^{-1}
 ( \bar{g}_k - g_{k-1} ),
\end{equation}
i.e., using the average of the gradients at iteration $k$.

It can be noticed that if $\beta > 0$ then (\ref{form:csa}) is a mixed
integer program whose complexity will be tied to the number of
dimensions and samples used to solve the problem.  Since at every
iteration new gradients are added, the dimensionality of the problem
will grow at each step regardless. And to guarantee the quality of
solution by SAA, the sample size should not be too small if the
dimension is large.

Therefore the complexity of this approach grows at each step, leading
to increasing solving times which will be an issue on long runs with
low convergence speed. However, by using the value of parameter $K$
with values greater than 0, we can use a limited memory or memoryless,
implementation to avoid this.

If $\beta  = 0$, then all binary variables must be set to zero and can
be eliminated from the problem, creating a continuous linear program which is much simpler to solve. This is commonly referred to the scenario approach, where all possible sampled scenarios of the random variables are being considered. This also implies that the solution will be closely tied to the most conservative of the sampled scenarios. There are other approaches to simplify and approximate this model present in the literature \cite{Ahm18}.

\section{Computational results for the quasi-Newton methods}


Two sets of results are presented: First, we consider a set of randomly generated problems intended to illustrate the properties and methodologies proposed in this paper. The second set of problems are real-life instances from the CUTEst test set~\cite{GOT15}, to test the applicability of these methods in a more realistic environment.

A comparison of the results is provided using different models and/or approximation formulations, and we discuss the practical implications obtained with each method. All models are implemented in Python 3.7.10, using Gurobi 9.1 as a solver for the resulting optimization problems and all computation were done on an Intel(R) i7 @ 2.7 GHz and 16 GB of memory over macOS 10.

The methods chosen in our experiments are as follows:

\begin{itemize}
	\item Steepest Descent (SD):
	\[B_k = I.\]
	\item Conjugate Gradients (CG): The symmetric CG method as presented in \eqref{eqn-BksymCG},
	\[B_k = \left( I - \frac{1}{g_{k-1}^T p_{k-1}\drop}g_k p_{k-1}^T\right) \left( I - \frac{1}{g_{k-1}^T p_{k-1}\drop} p_{k-1}\drop g_k^T\right).\]
	\item BFGS, as presented in \eqref{eqn-BFGSfull} and where $\rho_{k-1}$ is given by the secant condition in \eqref{eqn-secant},
	\[B_k = B_{k-1} + \frac1{g_{k-1}^T p_{k-1}} g_{k-1}g_{k-1}^T + \frac1{\alpha_{k-1}(g_k-g_{k-1})^Tp_{k-1}}(g_{k}-g_{k-1})(g_{k}-g_{k-1})^T.\]
	\item Memoryless-BFGS (ml-BFGS), as presented in \eqref{eqn-Bkmemoryless} and where $\rho_{k-1}$ is given by the secant condition \eqref{eqn-secant},
	\[B_k = I - \frac1{p_{k-1}^T p_{k-1}\drop} p_{k-1} p_{k-1}^T + \rho_{k-1} ( g_k - g_{k-1} ) ( g_k - g_{k-1})^T.\]
      \item Chance-Constrained Quasi Newton (CCQN $\beta$). The search
        direction is obtained by first solving \eqref{form:csa} for
        $K=0$ and $\beta$, then obtaining $\rho_{k-1}$ from \eqref{eqn-rhocsa}
        and finally $p_k$ from \eqref{eqn-pkcsa}.
	\item Limited Memory Chance-Constrained Quasi Newton
          (lm-CCQN $\beta$). Same as CCQN, but $0<K<k$.
\end{itemize}

In order to standardize our results across different experiments, \emph{performance profiles} are used in two different analyses: First, for the number of steps required by the method to break a certain gradient norm value which will define as the tolerance level (denoted as $tol$), then by taking the method with the lowest possible value of steps as the comparison point, we show how much larger the values of the other methods are compared to this minimum. Next, a performance profile is created to detect the minimum value of the gradient norm. Finally, the total amount of times the different experiments using a set method were able to reach a set thresholds is summed and presented: Starting from 1 (being the minimum) up to 20 (i.e. 20 times the value of the minimum). This methodology is fully expanded in \cite{EF21}.

Since noise will severely distort the gradient norms once it reaches a certain point in the run, a set of performance profiles are created for tolerance values close to the noise variance, i.e. if the variance $\sigma^2=10^{-2}$ then these performance profiles will be set to tolerances such as $10^{-1}, 10^{-2}$ and $10^{-3}$. Higher does not show significant differences to the deterministic case, and lower can cause the method to not reach any threshold.

For every experiment, we applied the following algorithm: At iteration $k$, $tol$ denotes our tolerance threshold of precision for the norm of the gradient of the solution obtained, $\bar{g}_k$ is the average value of the gradient $\tilde{g}_k$, $K \le k$ denotes the number of gradients to be used in the calculations and $MaxK$ is the maximum number of steps. When calculating the step length $\alpha$, an exact line search approach is used with the value of the gradient without noise (the deterministic value of $g_k$), as the objective is to isolate the effects of each method in finding a descent direction. The algorithm goes as follows:

\begin{algorithm}
\caption{\label{alg-general} General solving algorithm.}
\begin{algorithmic}[0]
  \State $k \gets 0$;
  \State $x_k \gets $ \mbox{initial point};
  \State $\tilde{g}_{k\omega} \gets Hx_k+c + \epsilon_\omega, \quad \omega \in
  \Omega$;
  \State $\bar{g}_k \gets \frac{1}{S}\sum_{\omega \in \Omega}\tilde{g}_{k\omega}$;
         \While{$\norm{g_k}_2 > tol \ \AND \ k \le MaxK$}
         \If{\texttt{method} = \texttt{CCQN}}
  	\State      $\gamma \gets $ solution to \eqref{form:csa} using $K$ gradients;
    \State      $\rho \gets $ solution to \eqref{eqn-rhocsa};
	\EndIf
  \State	  $B_k \gets$ from \texttt{method};
  \State      $p_k \gets $ solution to $B_k p_k = - \bar{g}_k$;
  \State      $\alpha_k \gets - \frac {g_k^T p_k}{p_k^T H p_k}$;
  \State      $x_{k+1} \gets x_k + \alpha_k p_k$;
  \State      $k \gets k + 1$;
  \State      $\tilde{g}_{k\omega} \gets Hx_k+c + \epsilon_\omega, \quad \omega \in
  \Omega$;
  \State $\bar{g}_k \gets \frac{1}{S}\sum_{\omega \in \Omega}\tilde{g}_{k\omega}$;
  \EndWhile
\end{algorithmic}
\end{algorithm}

All experiments are repeated 30 times using different random number generator seeds and using 20 samples of noisy gradients at each step, therefore the performance profiles also separate each method and experiment by seed. We used a maximum amount of steps $MaxK = 500$ and $K=10$ for the lm-CCQN method.

\subsection{Results for randomly created problems}\label{sec-smallprob}

The first experiment is a set of randomly generated unconstrained quadratic problems. For each problem, the Hessian matrix $H$ is defined as $H = Q^T Q + \epsilon \diag(U_{1,n})$, where $Q = a J_{1,n} + (b-a) U_{n,n}$, $a,b \in \mathbb{R}$,  $J_{n,m}$ is the unit $n\times m$ matrix, $U_{n,m}$ is a $n \times m$ matrix, each component of $U_{n,m}$ is randomly generated following a Uniform(0,1) distribution and $\epsilon>0$ is a sufficiently small number. The vector $c$ is randomly generated as $c = U_{n,1}$. In our experiments, we defined $a=-1, b=1, n = 100$ and $\epsilon = 0.3$.

\begin{figure}[H]
                \centering
                \subfloat[$\sigma^2=10^{-6}$]{{\includegraphics[width=6.5cm]{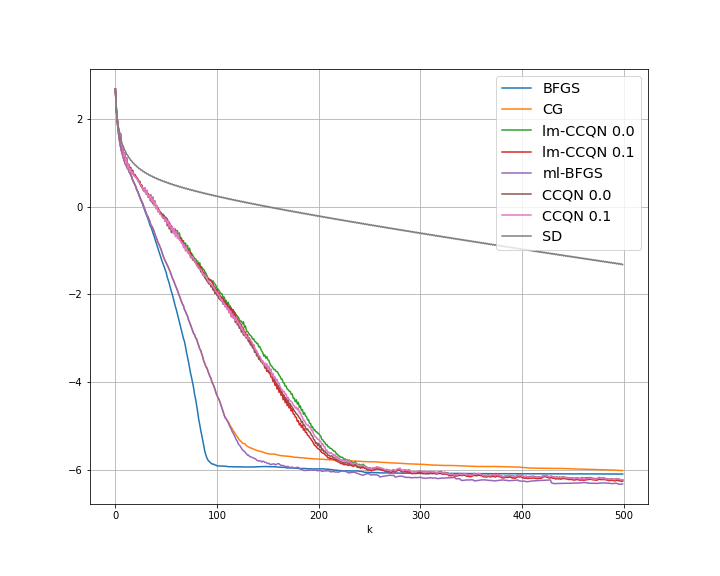}}}%
                \qquad
                \subfloat[$\sigma^2=10^{-2}$]{{\includegraphics[width=6.5cm]{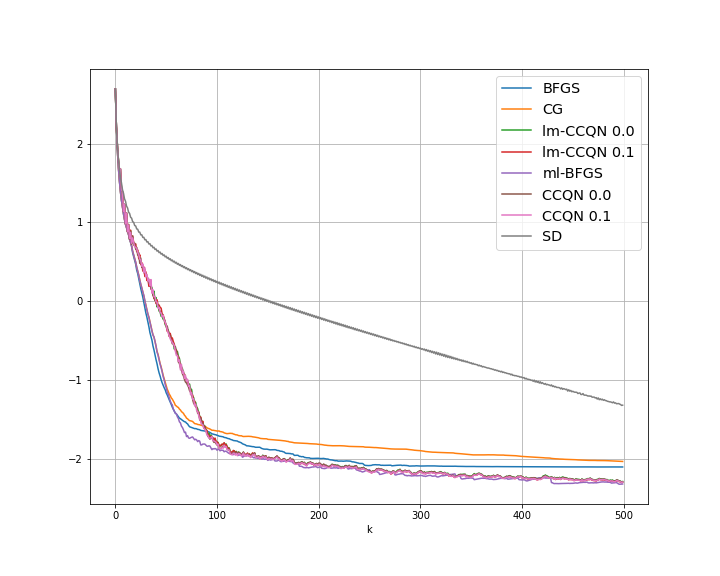}}}%
                 \caption{Average log norm of the gradient at step $k$ for each tested method with different noise variances.}
                \label{graph:models-lognorm}
\end{figure}

Figure \ref{graph:models-lognorm} shows the behaviour of each method in the two selected noise levels. The traditional approaches, such as CG or BFGS, appear to converge faster but the average log norm of the gradient can not surpass the tolerance threshold $tol $ smaller than the noise variance $\sigma^2$, while for CCQN and lm-CCQN (regardless of the value $\beta$) the average log norm of the gradient can surpass this barrier, albeit slower compared to the results presented by ml-BFGS.

In this experiment, the chosen value of $MaxK$ was not large enough for SD to show convergence as seen in figure \ref{graph:models-lognorm}, however we ran the same experiment for this method using a larger value, showing that average log norm of the gradient found by SD can suprass the $tol \le \sigma^2$ barrier, similar to CCQN, lm-CCQN and ml-BFGS.

\captionsetup[subfigure]{labelformat=empty}
\begin{figure}[H]
                \centering
                \subfloat[$tol = 10^{-5}, \sigma^2 = 10^{-6}$]{{\includegraphics[width=6.5cm]{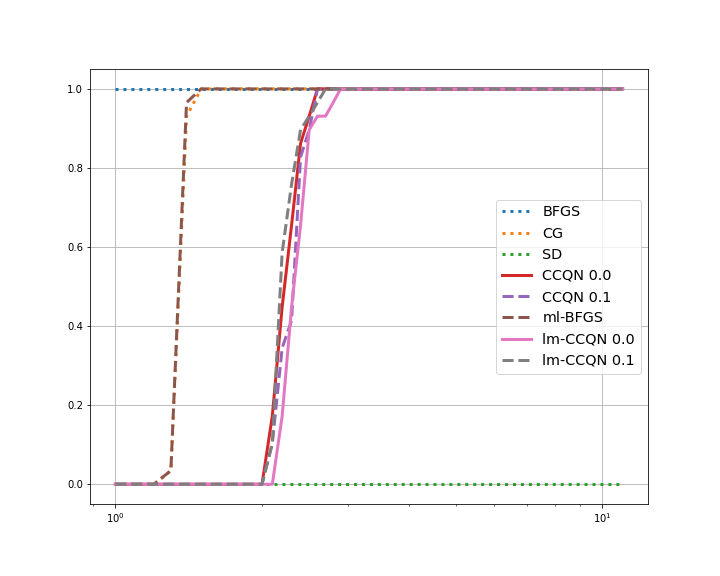}}}%
                \qquad
                \subfloat[$tol = 10^{-1}, \sigma^2 = 10^{-2}$]{{\includegraphics[width=6.5cm]{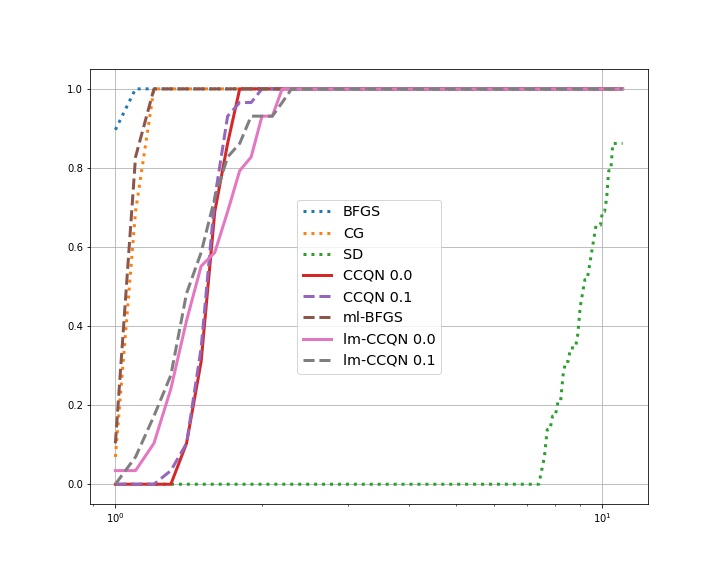}}}%
                \\
                \subfloat[$tol = 10^{-6}, \sigma^2 = 10^{-6}$]{{\includegraphics[width=6.5cm]{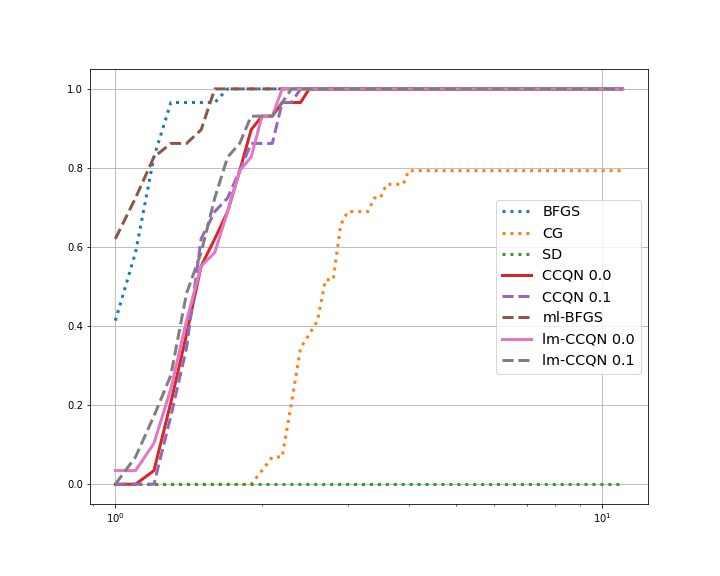}}}%
                \qquad
                \subfloat[$tol = 10^{-2}, \sigma^2 = 10^{-2}$]{{\includegraphics[width=6.5cm]{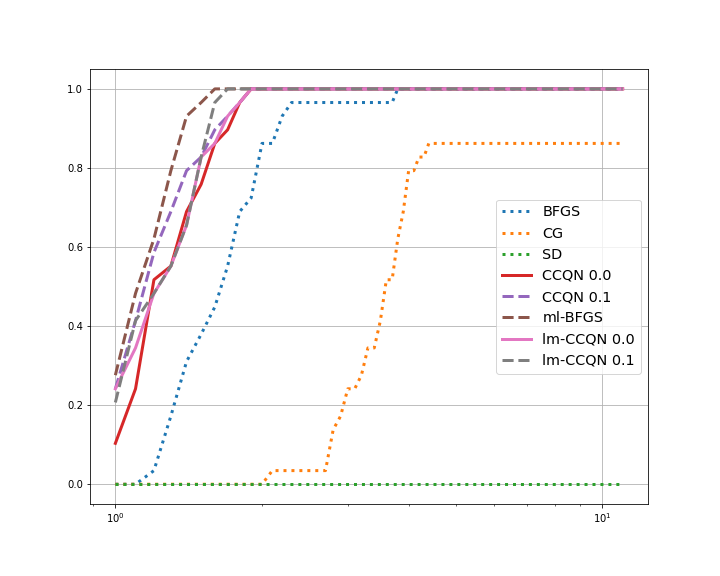}}}%
                 \caption{Performance profiles for different tolerances and noise variance levels.}
                \label{graph:models-pp}
\end{figure}

Figure \ref{graph:models-pp} shows the performance profiles of each method for two noise variance levels under two tolerance thresholds. In the traditional approaches, we can observe that ml-BFGS performs better overall, while CG and BFGS perform well under low noise variance but poorly under larger values. On the other hand, the performance of CCQN and lm-CCQN methods do not show significant differences for different tolerance and noise variance levels. Furthermore, the difference between using $\beta > 0$ or 0 is not significant, which means we implement a convex approximation using the scenario approach ($\beta=0$), avoiding the usage of a mixed integer linear program which  becomes difficult to solve for larger problems.


\captionsetup[subfigure]{labelformat=parens}
\begin{figure}[H]
                \centering
                \subfloat[$\sigma^2 = 10^{-6}$]{{\includegraphics[width=6.5cm]{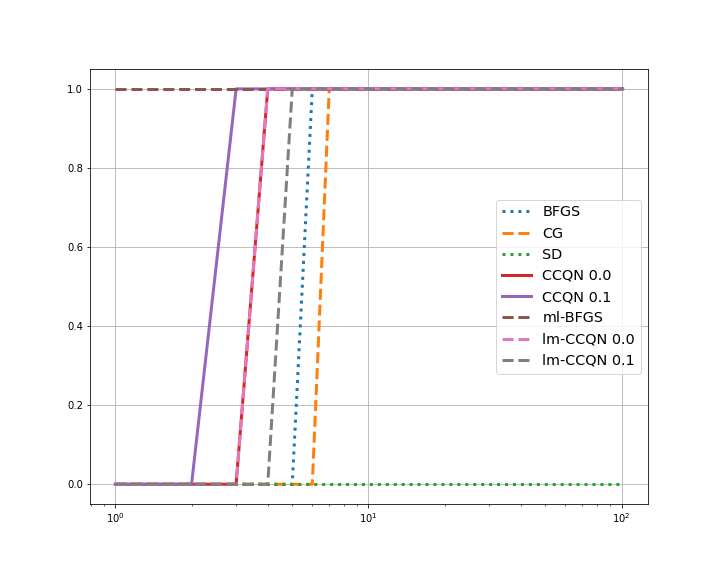}}}%
                \qquad
                \subfloat[$\sigma^2 = 10^{-2}$]{{\includegraphics[width=6.5cm]{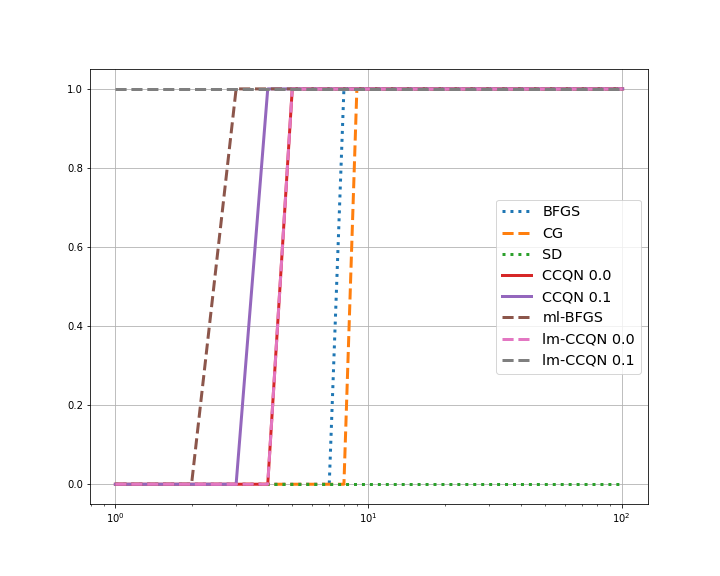}}}%
                \caption{Performance profile of the minimum gradient norm for different noise variance levels.}
                \label{graph:models-smallmin}
\end{figure}

Figure \ref{graph:models-smallmin} shows the performance profiles of the minimum gradient norm found in the set of problems with different seeds for two noise variance levels. When $\sigma^2=10^{-6}$, we can observe that ml-BFGS is able to obtain the minimum value consistently, i.e. for every problem and every seed, which is followed by CCQN and lm-CCQN. When $\sigma^2=10^{-2}$, lm-CCQN performs better than ml-BFGS, and followed by CCQN. The classical methods BFGS, CG and SD perform worse than the previously discussed methods independent of the noise variance, and specifically the minimum gradient norms found by SD are larger than any of minimum norms found by all the other methods.

\subsection{Results for CUTEst problems}\label{sec-qts}

In our experiments, we will compare the performance of the same
approaches presented in the last section applied to different problems
from the CUTEst test set \cite{GOT15}, specifically quadratic,
unconstrained and number of variables chosen by the user (\texttt{QUV}
using CUTEst classification system). However, only 6 problems families
fall in this category, therefore we implemented a second batch of
problems by adding those unconstrained sum of squares problems
(\texttt{SUV} using CUTEst classification system) which had a positive
definite Hessian at the starting point and left it constant throughout
the solving scheme. This brought the total amount of problems to
22. In our numerical experiments, results for noise variance $10^{-2}$
and $10^{-6}$ did not show significant differences. Therefore, in this section we only focus on results with a noise variance of $10^{-2}$.

\begin{figure}[H]
                \centering
                \subfloat[$tol = 10^0$]{{\includegraphics[width=6.5cm]{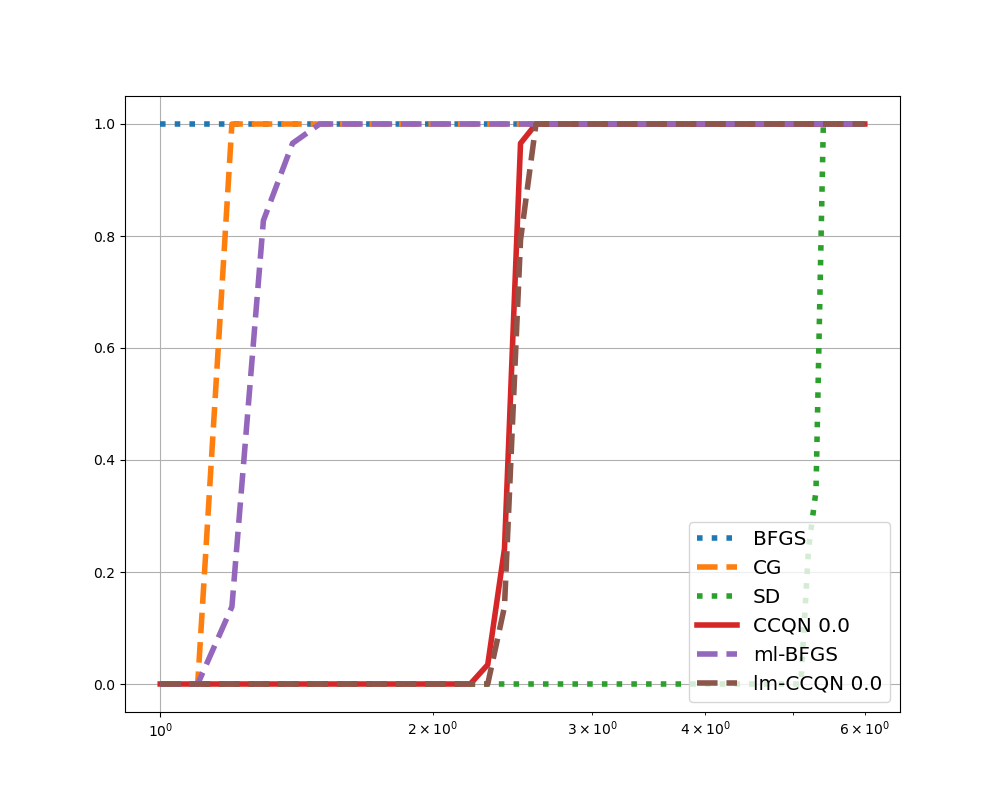}}}%
                \qquad
                \subfloat[$tol = 10^{-1}$]{{\includegraphics[width=6.5cm]{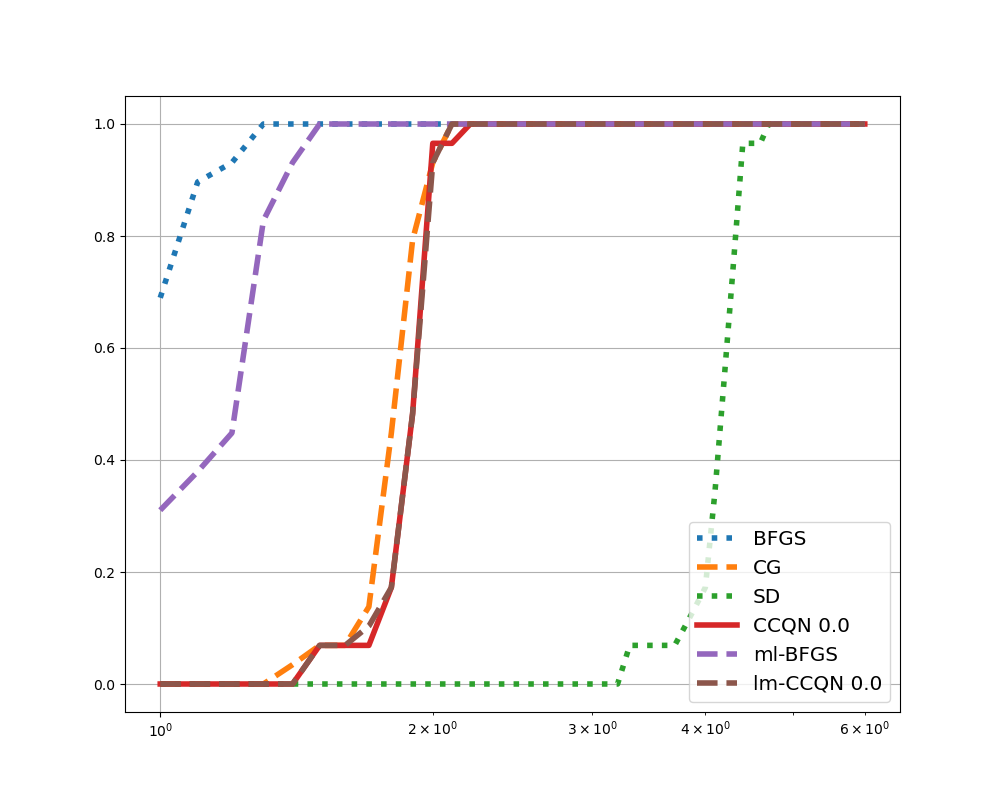}}}%
                \\
                \subfloat[$tol = 10^{-2}$]{{\includegraphics[width=6.5cm]{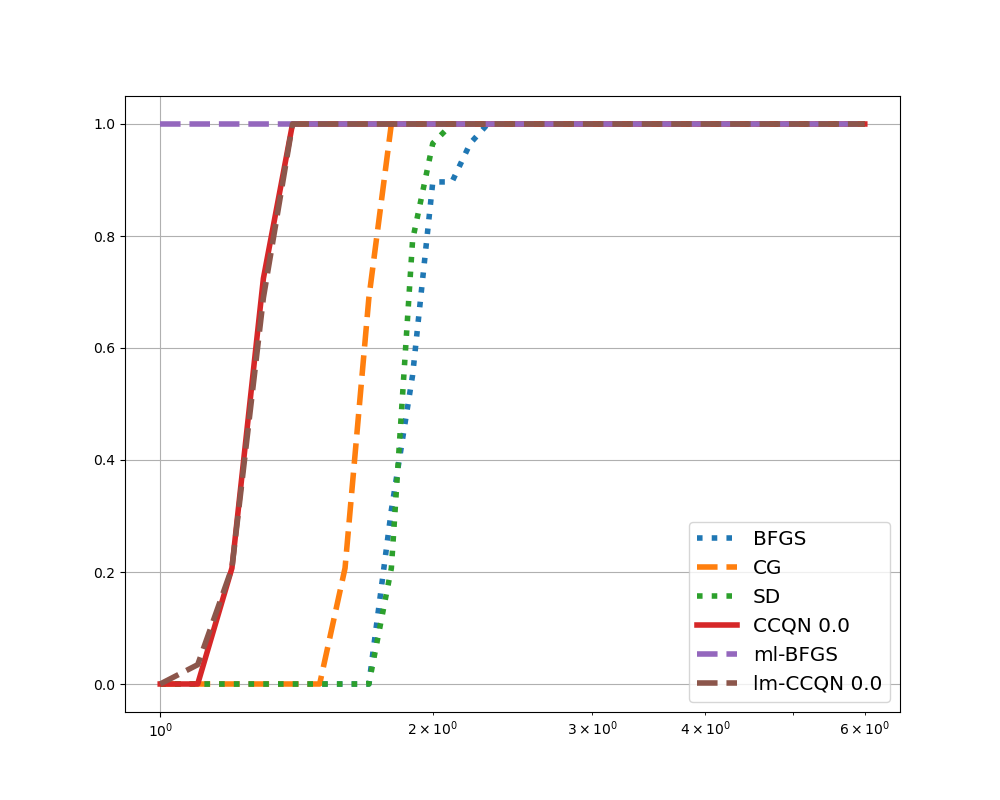}}}%
                \qquad
                \subfloat[$tol = 10^{-3}$]{{\includegraphics[width=6.5cm]{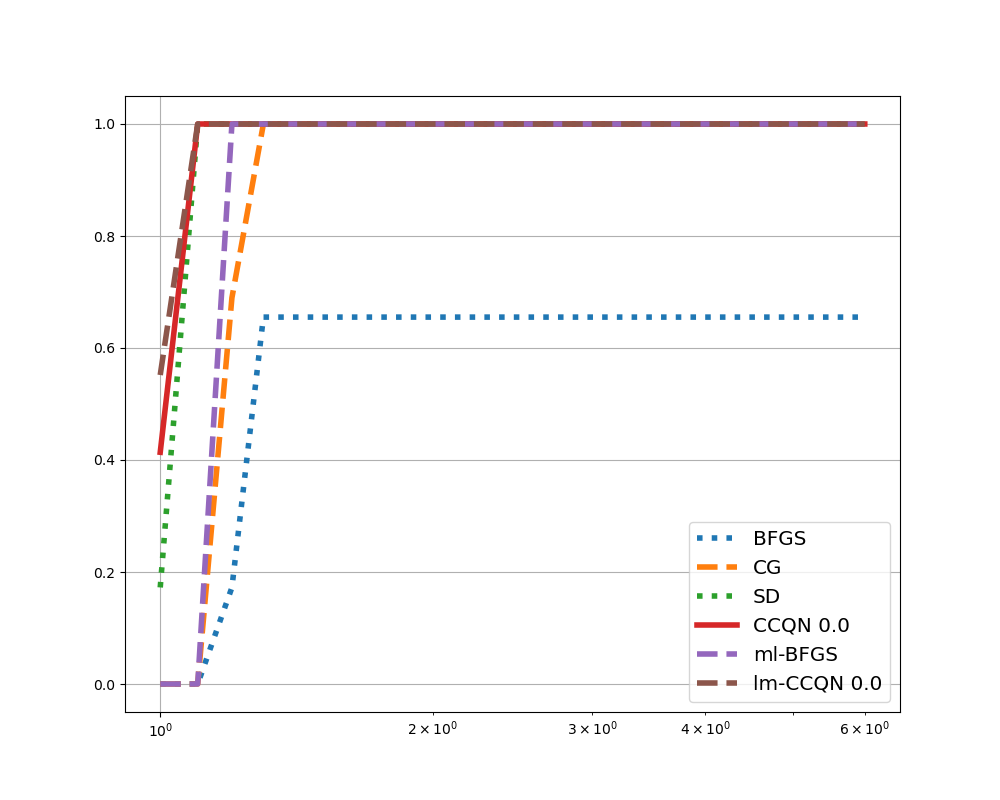}}}
                \caption{Performance profiles for different tolerance levels of the CUTEst problems.}
                \label{graph:cute-pp1}
\end{figure}

Figure \ref{graph:cute-pp1} shows the performance profiles of different methods solving CUTEst problems under different tolerance levels.
We can observe that the performance of BFGS and CG can be good and SD performs worst among all the tested methods, when the tolerance level is larger than the noise variance.
However, as shown in Figure \ref{graph:cute-pp1} (d),
SD can perform better than ml-BFGS, BFGS and CG.
ml-BFGS can show its efficiency and effectiveness in most cases.
CCQN and lm-CCQN present their robustness under different tolerance levels, and perform better when the tolerance level is close or smaller than the noise variance.
When $tol = 10^{-3}$, CCQN and lm-CCQN perform best, with the latter being slightly ahead.

When the tolerance level becomes much smaller, the performance profile of each method will depend on the problem, except CCQN and lm-CCQN. The ml-BFGS method will not always be the best, while the performances of CCQN and lm-CCQN are among the top three methods, which shows their robustness if the noise variance is larger the tolerance level. This behaviour can be observed in Appendix \ref{sec-appendix-a} for each individual problem.


\begin{figure}[H]
                \centering
                \includegraphics[width=8cm]{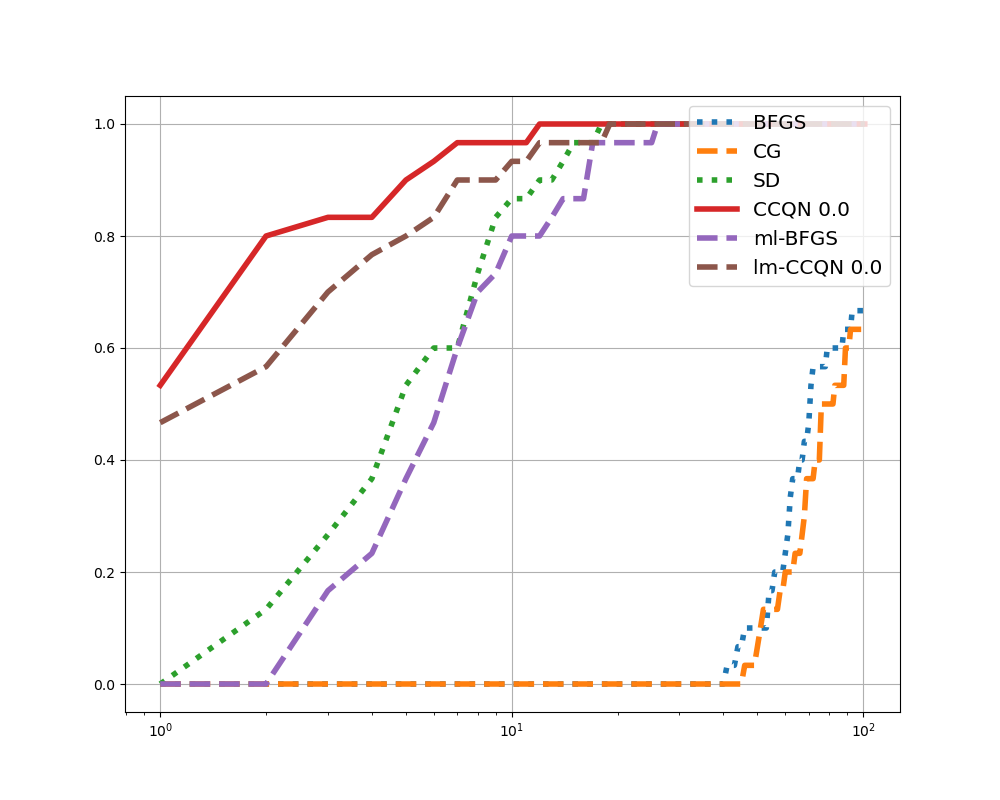}
                \caption{Performance profile of the minimum gradient norm found of the CUTEst problems.}
                \label{graph:models-cutemin}
\end{figure}

Figure \ref{graph:models-cutemin} the performance profiles of the minimum gradient norm found in the CUTEst set of problems with different seeds. CCQN is able to obtain the minimum value consistently, followed by the
lm-CCQN, SD and ml-BFGS (in that order), while BFGS and CG perform worst than all the other studied methods. The chance-constrained methods were able to obtain the minimum value faster and more frequently, even if compared it to ml-BFGS which had a good performance across our previously shown profiles.

\section{Conclusion}

In this paper, we have studied low-rank quasi-Newton methods for
minimizing a strictly convex quadratic function in a noisy framework.
We have considered a memoryless BFGS method and compared to a BFGS
method, the method of conjugate graddients and steepest descent.
In order to potentially improve the performance of the
low-rank quasi-Newton method, a chance constrained stochastic
optimization model has also been formulated.
The secant condition is here replaced by solving a one-dimensional convex
quadratic programming problem.  
The proposed chance constrained model, which can be solved effectively by sample average approximation method or scenario approach, has been proven to provide a descent search direction with a high probability in the random noisy framework, while the deterministic model may fail to provide a descent direction,
if the noise level is large.

In the numerical experiments, we compare classical methods and the
proposed chance constrained model in a noisy setting.  Results of
ml-BFGS and CCQN show promise when solving problems with uncertainty
in the gradient, however the latter is more consistent and its
performance appear to be independent of the problem, while the former
does not.  The performance of chance-constrained model (and its
different iterations) appears to be in the top three in terms of
convergence speed under different tolerance thresholds.
Furthermore, while studying the behaviour of all the models, the minimal value of gradient norm was consistently found by the approach based on chance constrained model.
Therefore, we believe that the usage of more advanced solving algorithms than the one presented (i.e. stochastic inexact linesearch) could further improve the results presented in this paper.

Finally, our intention is to investigate the behavior and the interplay between quality and robustness of the low-rank quasi-Newton method, especially in the case of large noise and multiple copies of gradients.
Both the theoretical and numerical results show that we can gain the robustness and accuracy of the computed solution with the chance constrained model, although the computational cost can be high.
This shows the potential to be further considered and explored in convex optimization problems.

\section*{Acknowledgement}
This preprint has not undergone peer review or any post-submission improvements or corrections. 
The Version of Record of this article is published in Computational Optimization and Applications, and is available online at https://doi.org/10.1007/ s10589-025-00661-4

\bibliography{refs,references,references2}

\newcommand{\etalchar}[1]{$^{#1}$}
\def\cprime{$'$}
\begin{thebibliography}{BHdMM{\etalchar{+}}16}

\bibitem[AX18]{Ahm18}
S.~Ahmed and W.~Xie.
\newblock Relaxations and approximations of chance constraints under finite
  distributions.
\newblock {\em Mathematical Programming}, 170(1), 43--65, 2018.

\bibitem[BHdMM{\etalchar{+}}16]{Bar16}
J.~Barrera, T.~Homem-de Mello, E.~Moreno, B.~K. Pagnoncelli, and G.~Canessa.
\newblock Chance-constrained problems and rare events: an importance sampling
  approach.
\newblock {\em Mathematical Programming}, 157(1), 153--189, 2016.

\bibitem[BHNS16]{BHNS16}
R.~H. Byrd, S.~L. Hansen, J.~Nocedal, and Y.~Singer.
\newblock {A stochastic quasi-{Newton} method for large-scale optimization}.
\newblock {\em {SIAM Journal on Optimization}}, {26}({2}), {1008--1031},
  {2016}.

\bibitem[BNT16]{BNT16}
A.~S. Berahas, J.~Nocedal, and M.~Takac.
\newblock {A multi-batch {L-BFGS} method for machine learning}.
\newblock In D.~D. Lee, M.~Sugiyama, U.~V. Luxburg, I.~Guyon, and R.~Garnett,
  editors, {\em {Advances in Neural Information Processing Systems 29 (NIPS
  2016)}}, volume~{29}, {2016}.

\bibitem[DW84]{DW84}
J.~E. Dennis and H.~F. Walker.
\newblock {Inaccuracy in quasi-{Newton} methods - local improvement theorems}.
\newblock {\em {Mathematical Programming Study}}, {22}({DEC}), {70--85},
  {1984}.

\bibitem[EF21]{EF21}
D.~Ek and A.~Forsgren.
\newblock {Exact linesearch limited-memory quasi-Newton methods for minimizing
  a quadratic function}.
\newblock {\em {Computational Optimization and Applications}}, {79}({3}),
  {789--816}, {2021}.

\bibitem[FO18]{FO18}
A.~Forsgren and T.~Odland.
\newblock On exact linesearch quasi-{Newton} methods for minimizing a quadratic
  function.
\newblock {\em Computational Optimization and Applications}, 69(1), 225--241,
  2018.

\bibitem[GOT15]{GOT15}
N.~I.~M. Gould, D.~Orban, and P.~L. Toint.
\newblock {CUTEst}: a constrained and unconstrained testing environment with
  safe threads for mathematical optimization.
\newblock {\em Computational Optimization and Applications}, 60(3), 545--557,
  2015.

\bibitem[LMH15]{LMH15}
A.~Lucchi, B.~McWilliams, and T.~Hofmann.
\newblock A variance reduced stochastic {Newton} method.
\newblock Preprint arXiv:1503.08316 [cs.LG], 2015.

\bibitem[MNJ16]{MNJ16}
P.~Moritz, R.~Nishihara, and M.~Jordan.
\newblock A linearly-convergent stochastic {L-BFGS} algorithm.
\newblock In {\em Artificial Intelligence and Statistics}, pages 249--258.
  PMLR, 2016.

\bibitem[MR13]{MR13}
A.~Mokhtari and A.~Ribeiro.
\newblock Regularized stochastic {BFGS} algorithm.
\newblock In {\em 2013 IEEE Global Conference on Signal and Information
  Processing}, pages 1109--1112. IEEE, 2013.

\bibitem[MR15]{MR15}
A.~Mokhtari and A.~Ribeiro.
\newblock Global convergence of online limited memory {BFGS}.
\newblock {\em The Journal of Machine Learning Research}, 16(1), 3151--3181,
  2015.

\bibitem[Saa03]{Saa95}
Y.~Saad.
\newblock {\em Iterative methods for sparse linear systems}.
\newblock Society for Industrial and Applied Mathematics, Philadelphia, PA,
  2003.

\bibitem[SDR14]{SDR14}
A.~Shapiro, D.~Dentcheva, and A.~Ruszczy{\'n}ski.
\newblock {\em Lectures on stochastic programming: modeling and theory}.
\newblock SIAM, Philadelphia, 2014.

\bibitem[SXBN20]{shi2020noisetolerant}
H.-J.~M. Shi, Y.~Xie, R.~Byrd, and J.~Nocedal.
\newblock A noise-tolerant quasi-{Newton} algorithm for unconstrained
  optimization.
\newblock Preprint arXiv:2010.04352 [math.OC], 2020.

\bibitem[XBN20]{XBN20}
Y.~Xie, R.~H. Byrd, and J.~Nocedal.
\newblock {Analysis of the {BFGS} method with errors}.
\newblock {\em {SIAM Journal on Optimization}}, {30}({1}), {182--209}, {2020}.

\end{thebibliography}
\bibliographystyle{myalpha}

\newpage
\appendix
\section{Average log norms gradients of the CUTEst problems}\label{sec-appendix-a}
In this section, the average log norm of each CUTEst problem is presented. The objective is to make evident the difference between problems as stated in the experiment section \ref{sec-qts}. All of these results are presented for $\sigma^2=10^{-2}$.

\captionsetup[subfigure]{labelformat=empty}

\begin{figure}[H]
                \centering
                \subfloat[\texttt{ARGLINA}]{{\includegraphics[width=6.5cm]{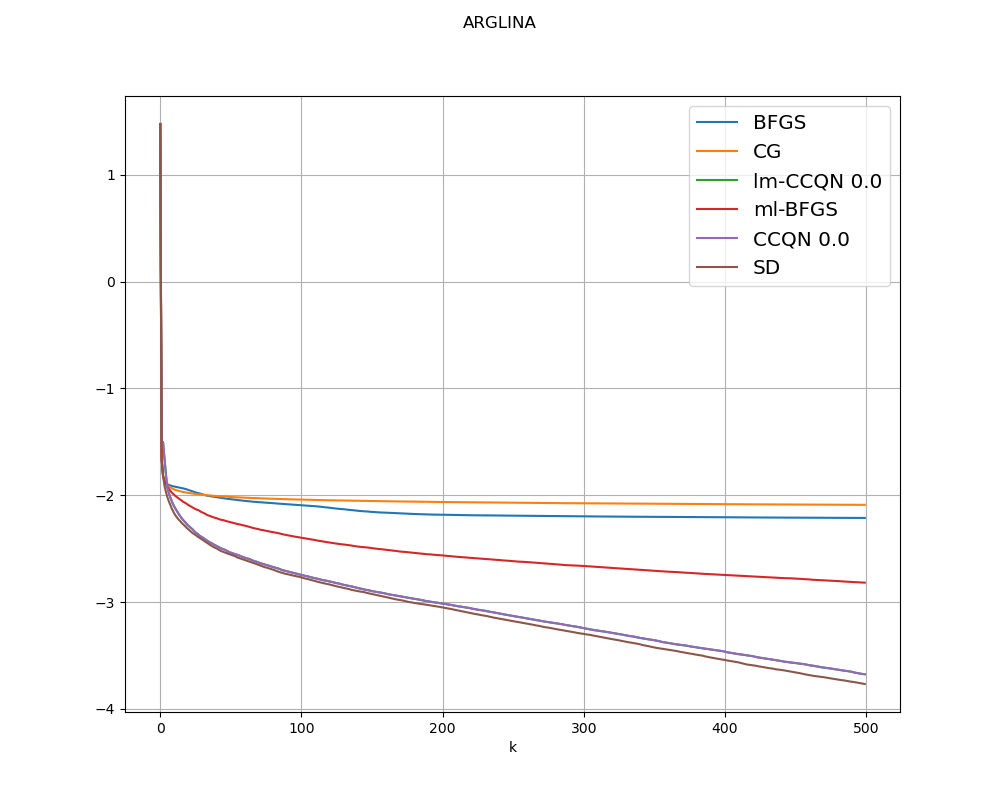}}}%
                \qquad
                \subfloat[\texttt{BDQRTIC}]{{\includegraphics[width=6.5cm]{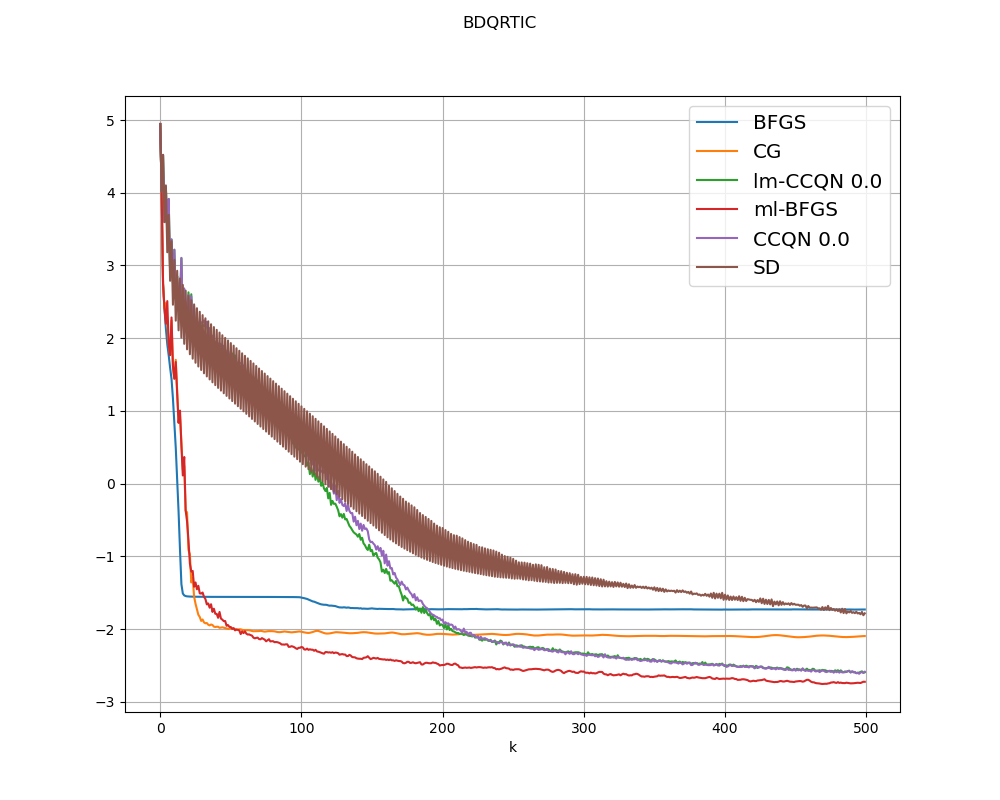}}}%
                \caption{Average log norm of the gradient at step $k$ for each tested method.}
\end{figure}

\begin{figure}[H]
                \centering
                \subfloat[\texttt{CHAINWOO}]{{\includegraphics[width=6.5cm]{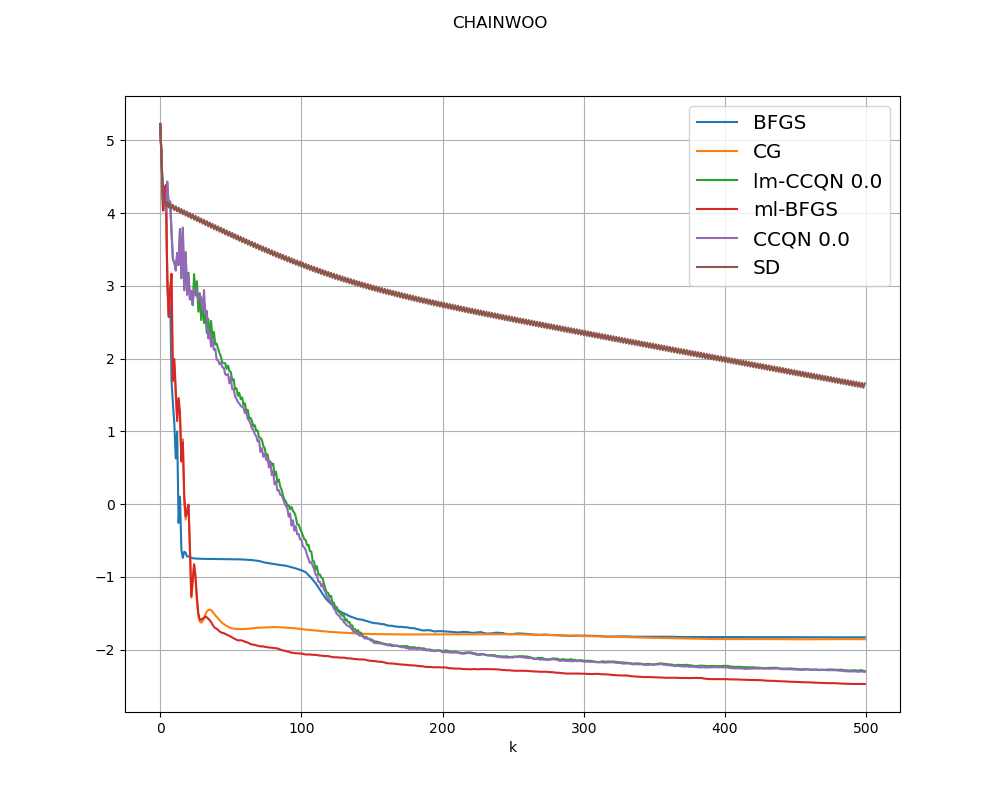}}}%
                \qquad
                \subfloat[\texttt{EXTROSNB}]{{\includegraphics[width=6.5cm]{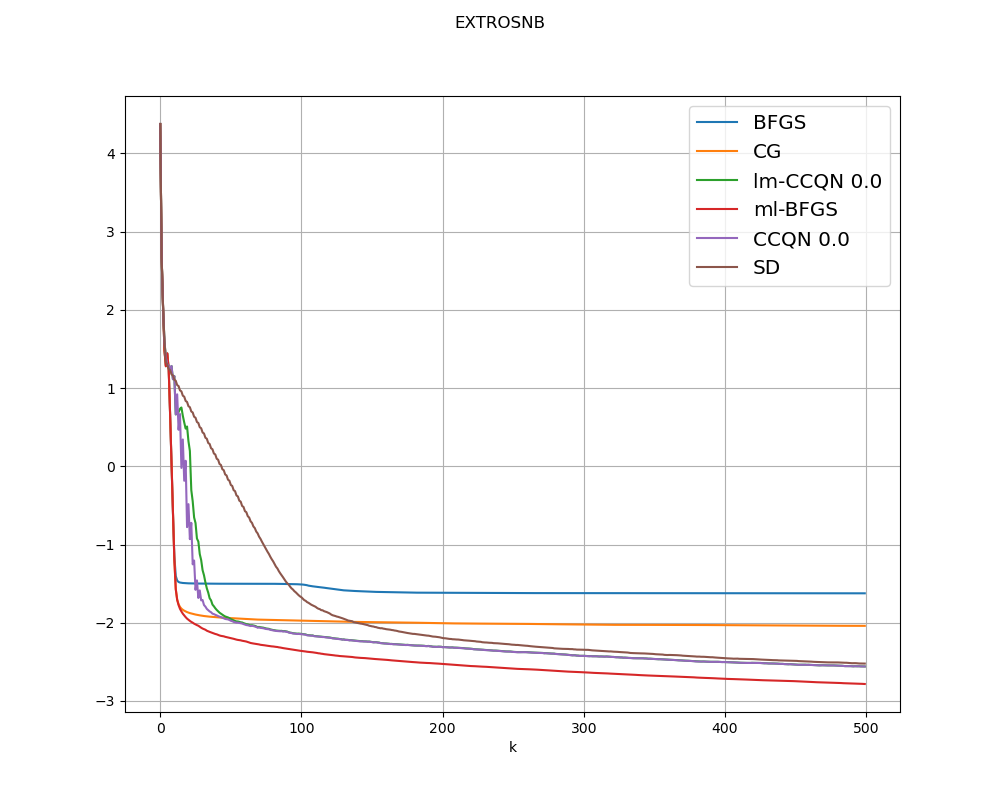}}}%
                \caption{Average log norm of the gradient at step $k$ for each tested method.}
\end{figure}

\begin{figure}[H]
                \centering
                \subfloat[\texttt{LIARWHD}]{{\includegraphics[width=6.5cm]{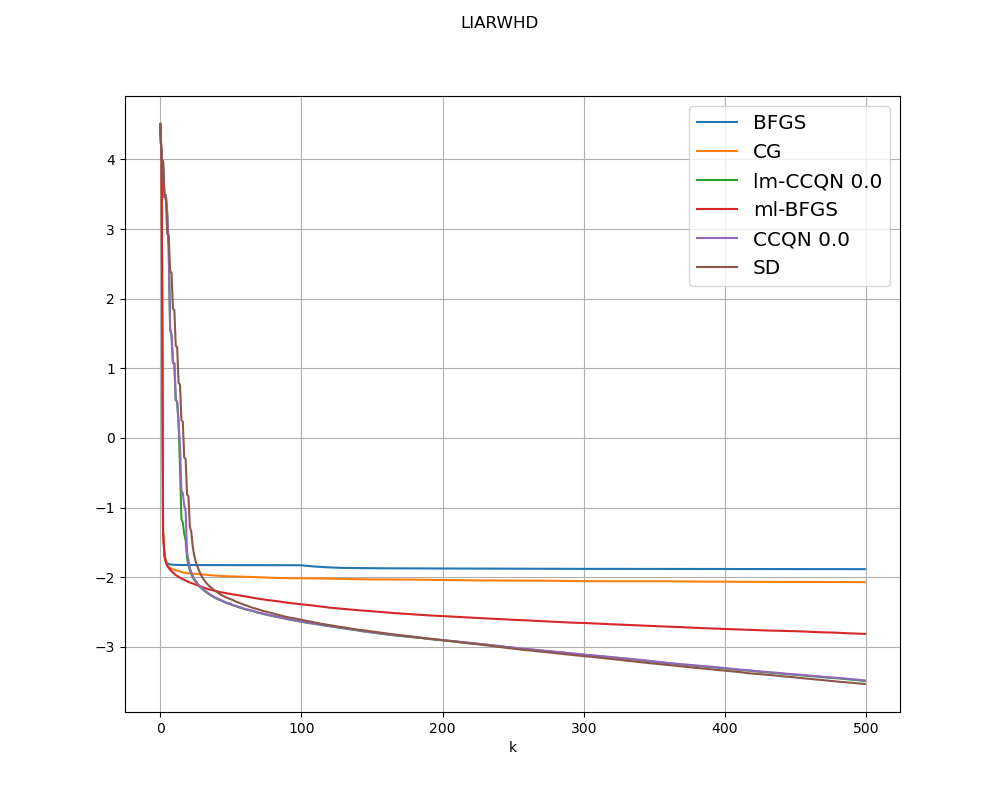}}}%
                \qquad
                \subfloat[\texttt{PENALTY1}]{{\includegraphics[width=6.5cm]{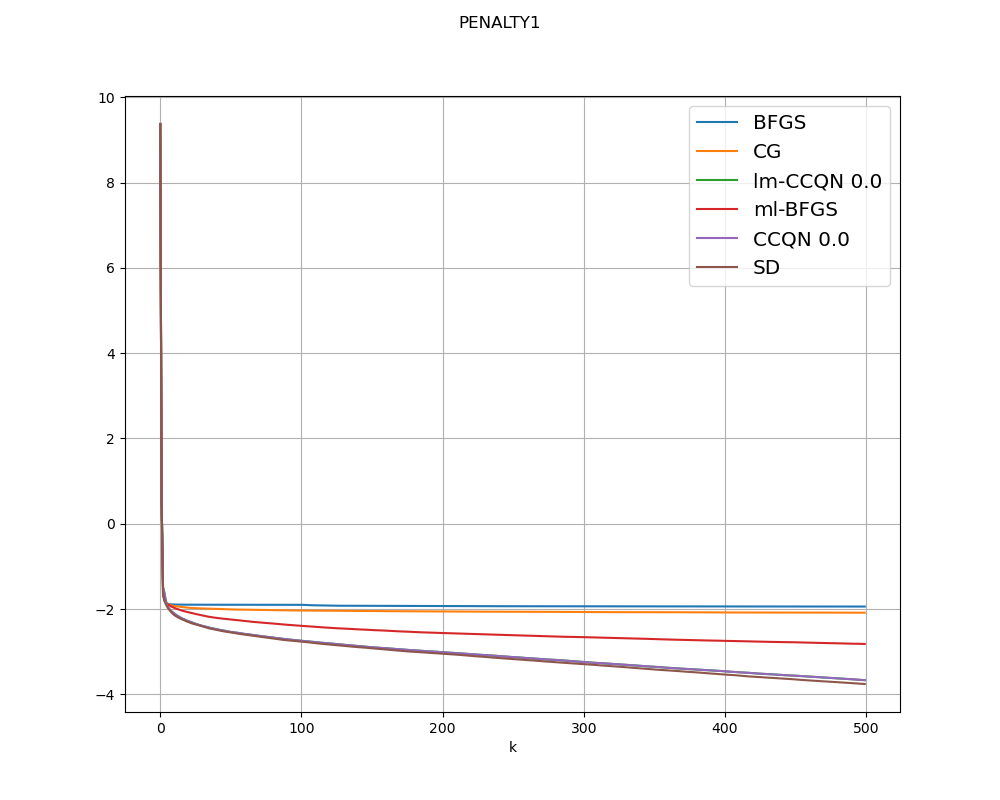}}}%
                \caption{Average log norm of the gradient at step $k$ for each tested method.}
\end{figure}

\begin{figure}[H]
                \centering
                \subfloat[\texttt{PENALTY2}]{{\includegraphics[width=6.5cm]{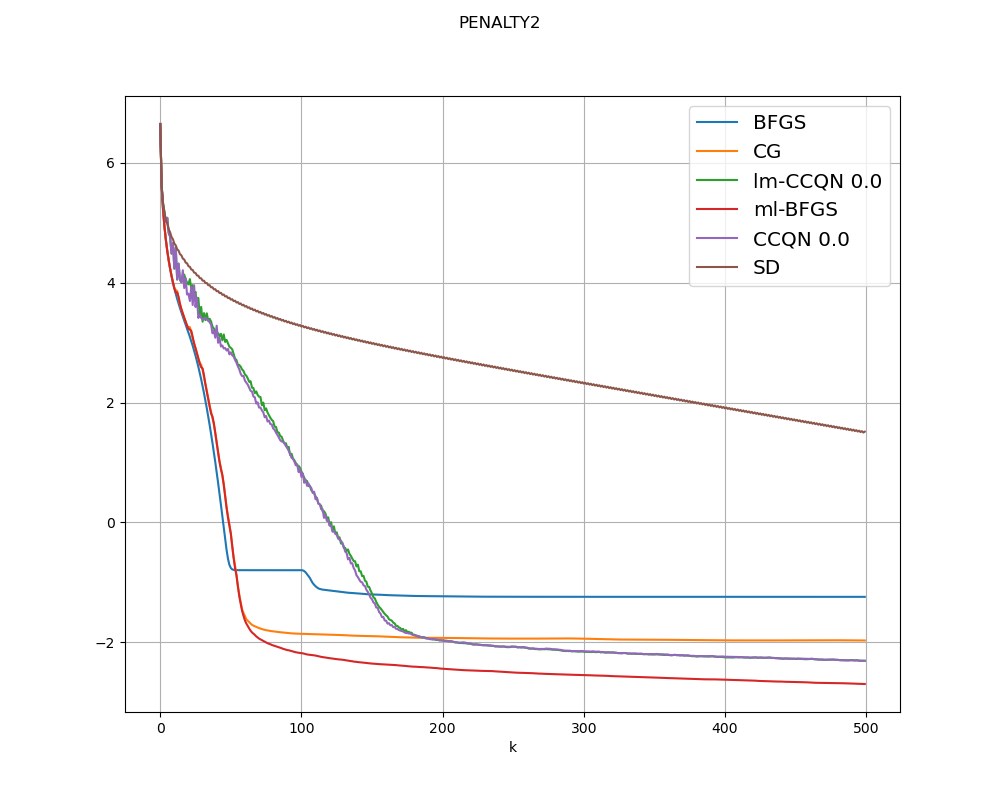}}}%
                \qquad
                \subfloat[\texttt{SROSENBR}]{{\includegraphics[width=6.5cm]{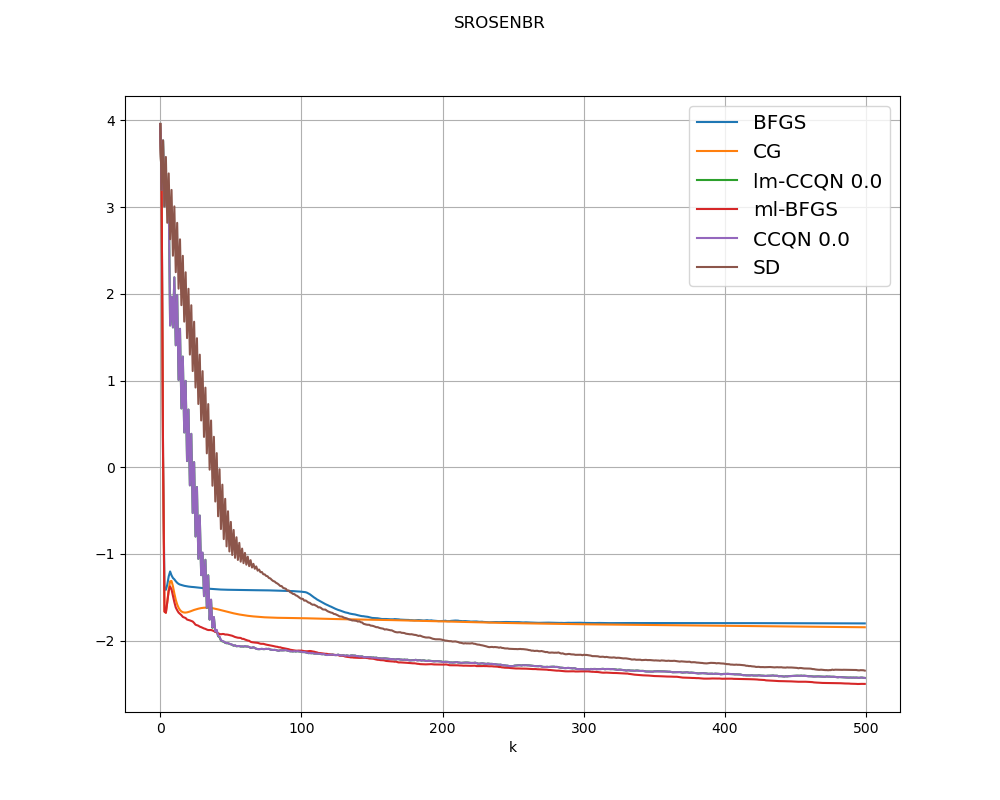}}}%
                \caption{Average log norm of the gradient at step $k$ for each tested method.}
\end{figure}

\begin{figure}[H]
                \centering
                \subfloat[\texttt{TQUARTIC}]{{\includegraphics[width=6.5cm]{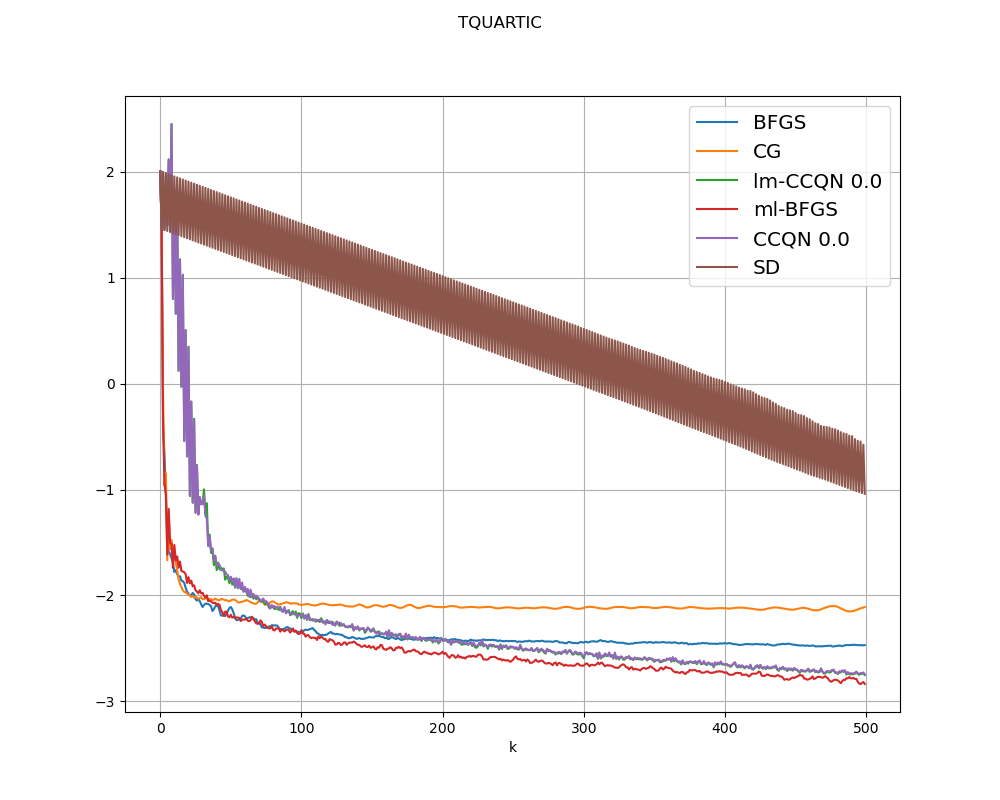}}}%
                \qquad
                \subfloat[\texttt{WOODS}]{{\includegraphics[width=6.5cm]{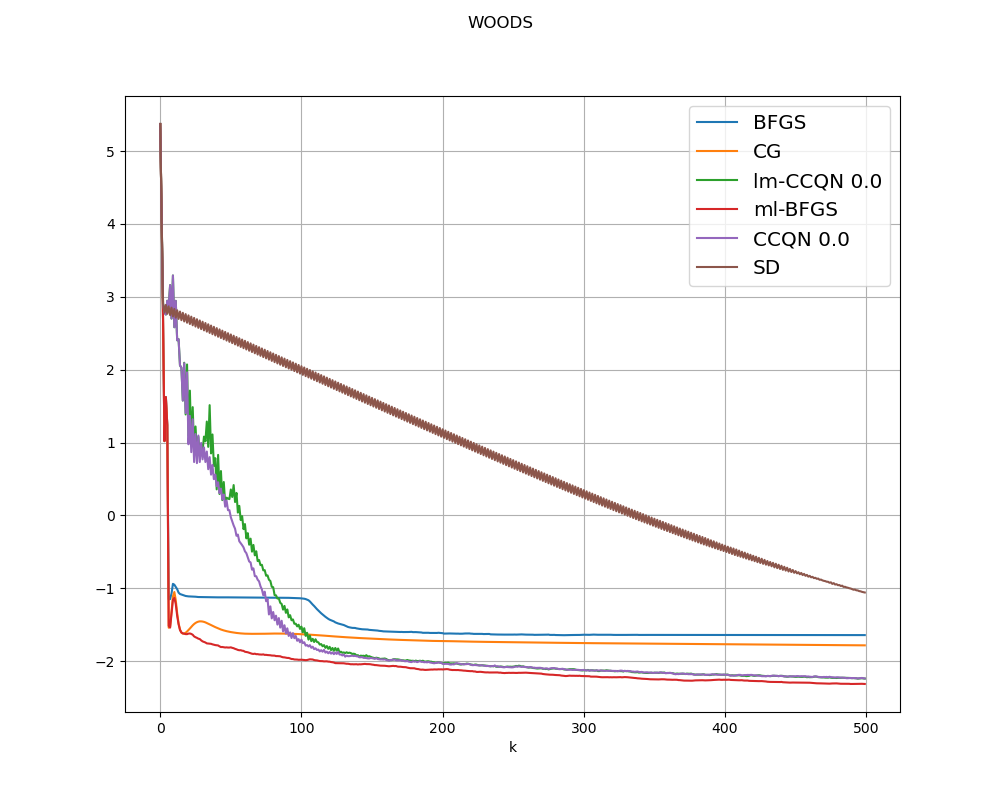}}}%
                \caption{Average log norm of the gradient at step $k$ for each tested method.}
\end{figure}

\begin{figure}[H]
                \centering
                \subfloat[\texttt{INTEQNELS}]{{\includegraphics[width=6.5cm]{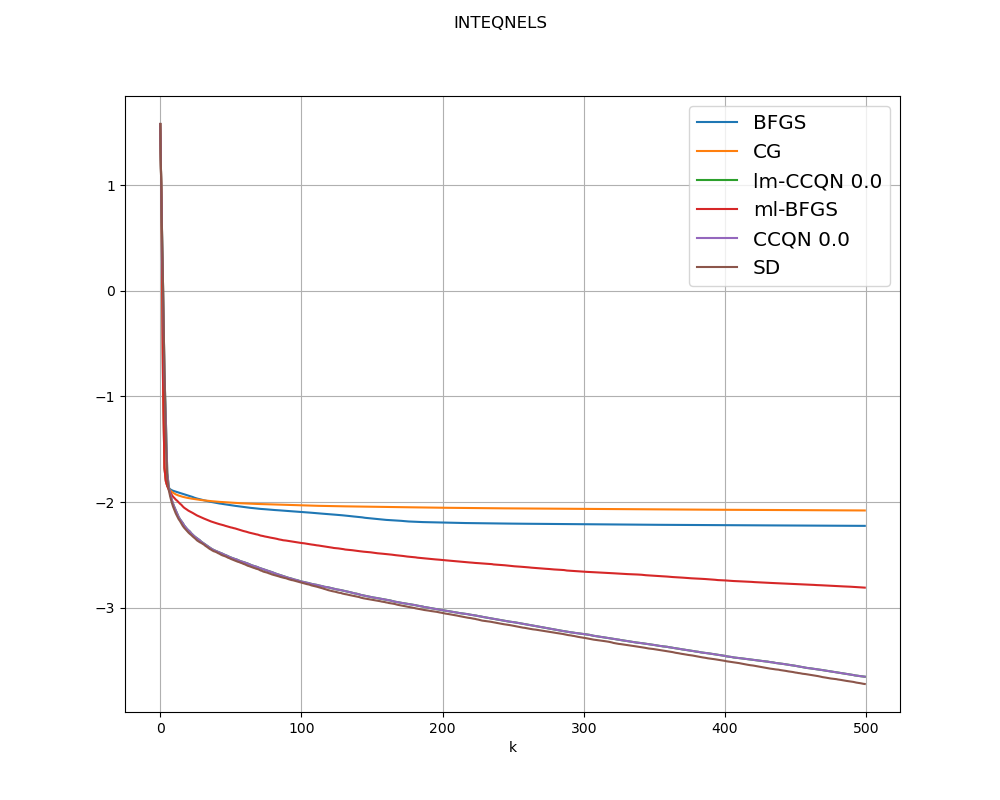}}}%
                \qquad
                \subfloat[\texttt{CHNROSNB}]{{\includegraphics[width=6.5cm]{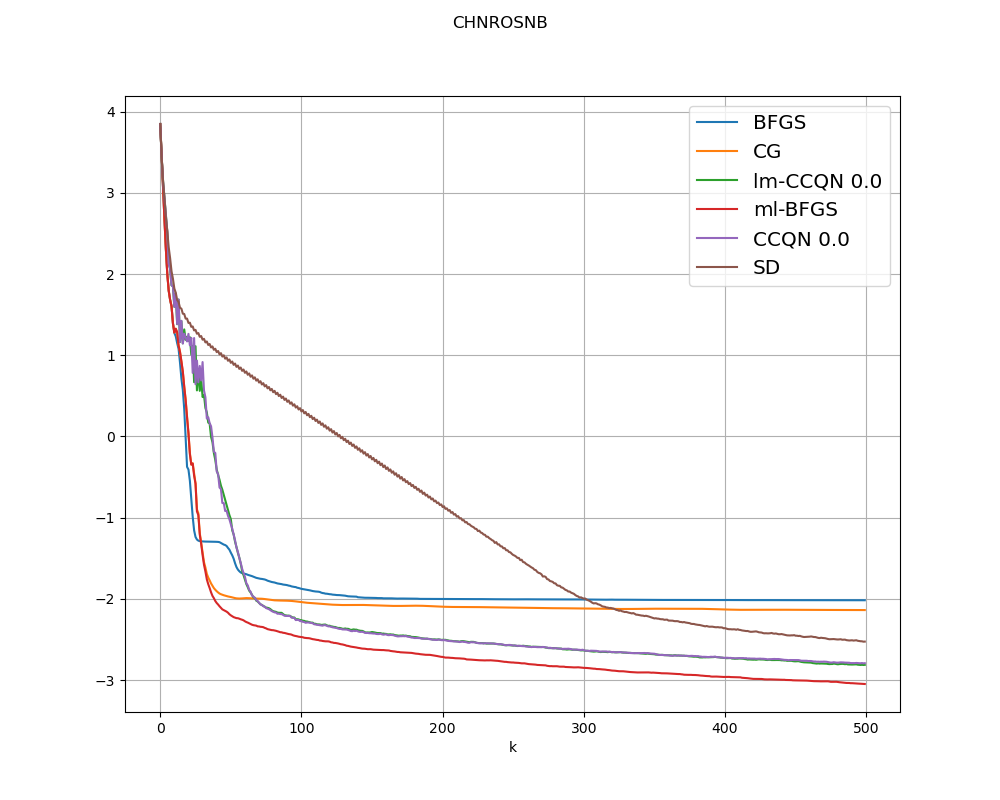}}}%
                \caption{Average log norm of the gradient at step $k$ for each tested method.}
\end{figure}

\begin{figure}[H]
                \centering
                \subfloat[\texttt{ERRINROS}]{{\includegraphics[width=6.5cm]{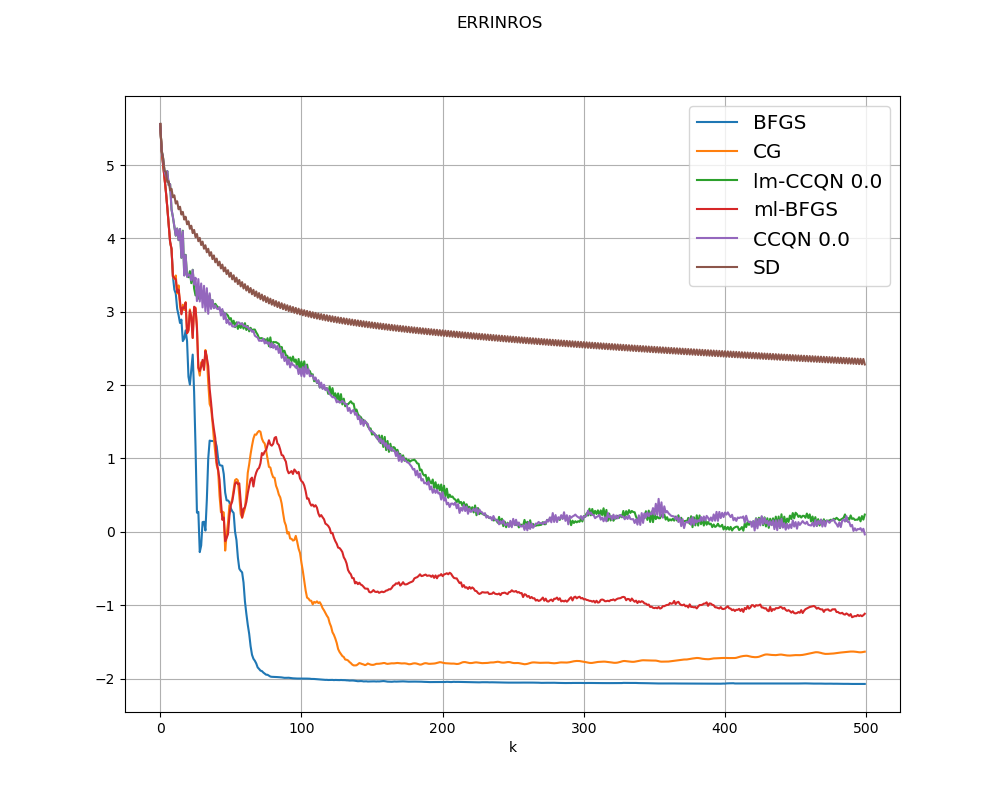}}}%
                \qquad
                \subfloat[\texttt{ERRINRSM}]{{\includegraphics[width=6.5cm]{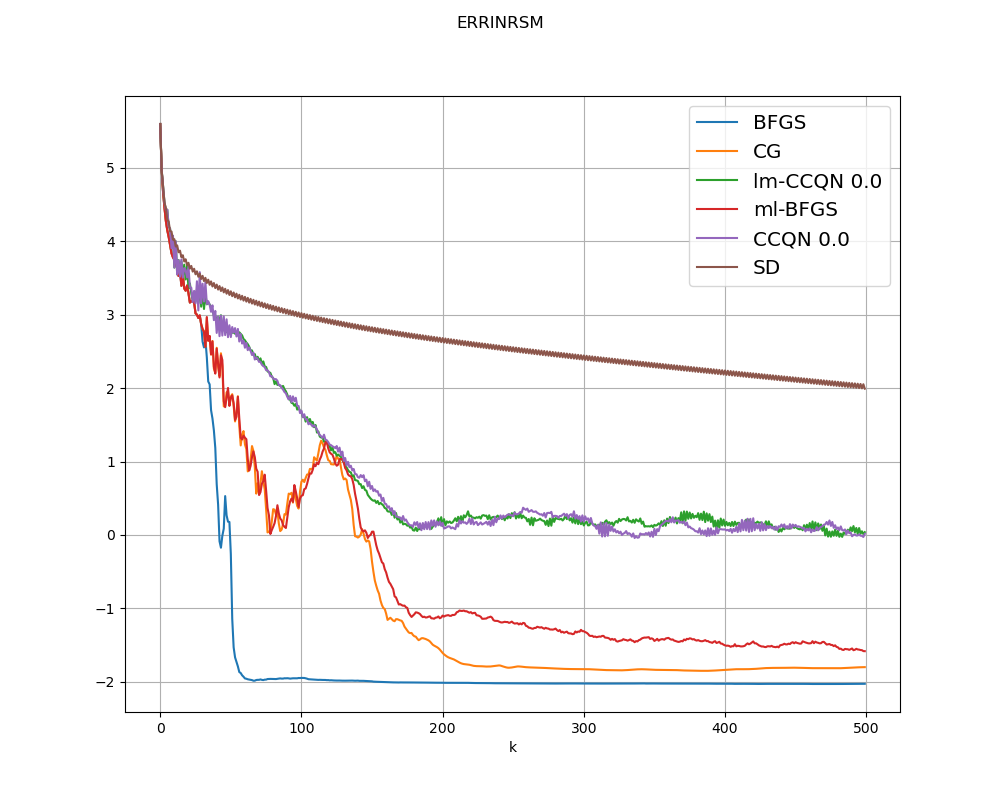}}}%
                \caption{Average log norm of the gradient at step $k$ for each tested method.}
\end{figure}

\begin{figure}[H]
                \centering
                \subfloat[\texttt{DIXON3DQ\_n1000}]{{\includegraphics[width=6.5cm]{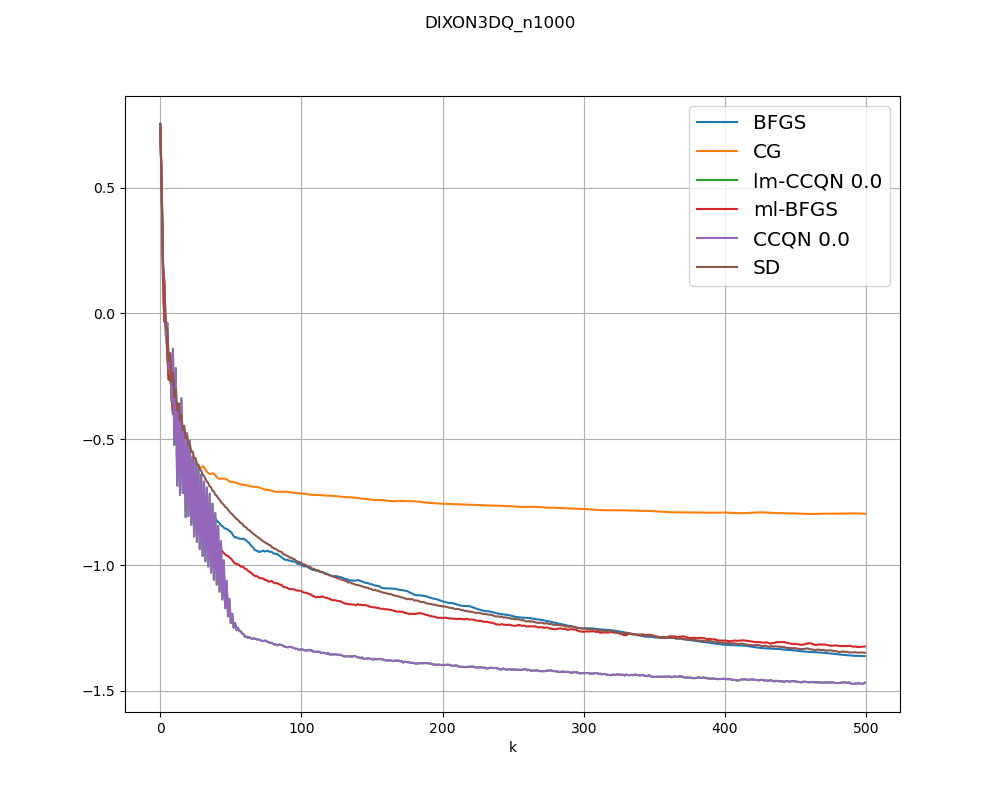}}}%
                \qquad
                \subfloat[\texttt{DIXON3DQ\_n100}]{{\includegraphics[width=6.5cm]{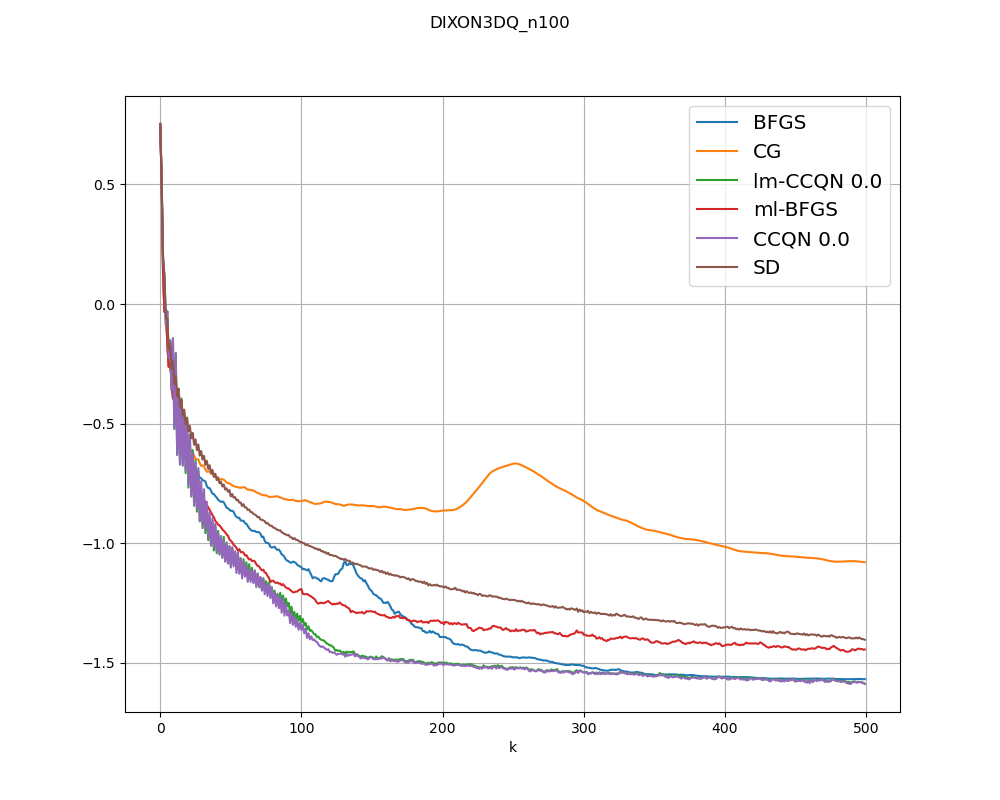}}}%
                \caption{Average log norm of the gradient at step $k$ for each tested method.}
\end{figure}

\begin{figure}[H]
                \centering
                \subfloat[\texttt{DIXON3DQ}]{{\includegraphics[width=6.5cm]{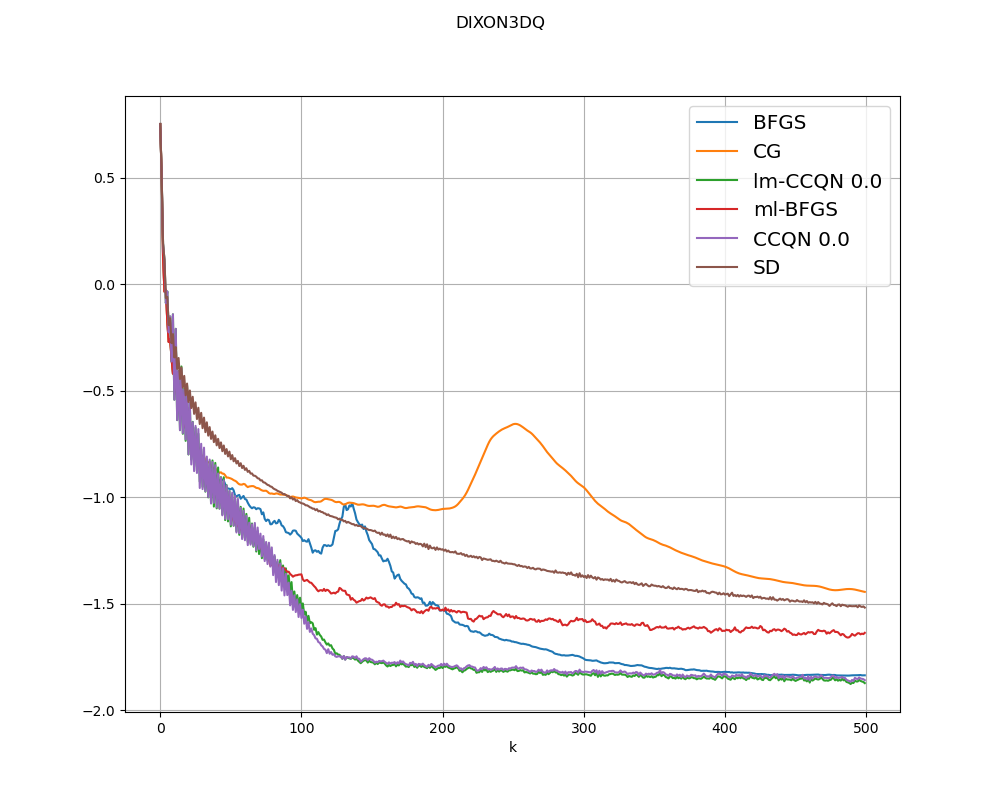}}}%
                \qquad
                \subfloat[\texttt{DQDRTIC\_n100}]{{\includegraphics[width=6.5cm]{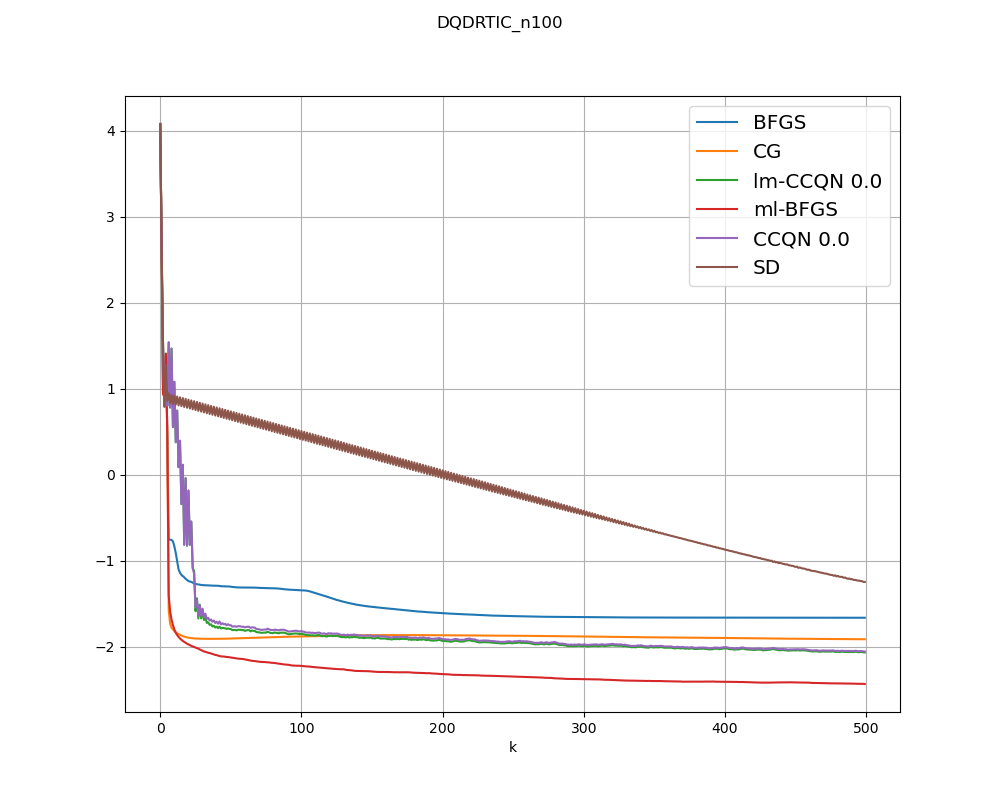}}}%
                \caption{Average log norm of the gradient at step $k$ for each tested method.}
\end{figure}

\begin{figure}[H]
                \centering
                \subfloat[\texttt{DIXON3DQ}]{{\includegraphics[width=6.5cm]{pics/graph-lognorm-DIXON3DQ}}}%
                \qquad
                \subfloat[\texttt{DQDRTIC\_n100}]{{\includegraphics[width=6.5cm]{pics/graph-lognorm-DQDRTIC_n100}}}%
                \caption{Average log norm of the gradient at step $k$ for each tested method.}
\end{figure}

\begin{figure}[H]
                \centering
                \subfloat[\texttt{TRIDIA\_n100}]{{\includegraphics[width=6.5cm]{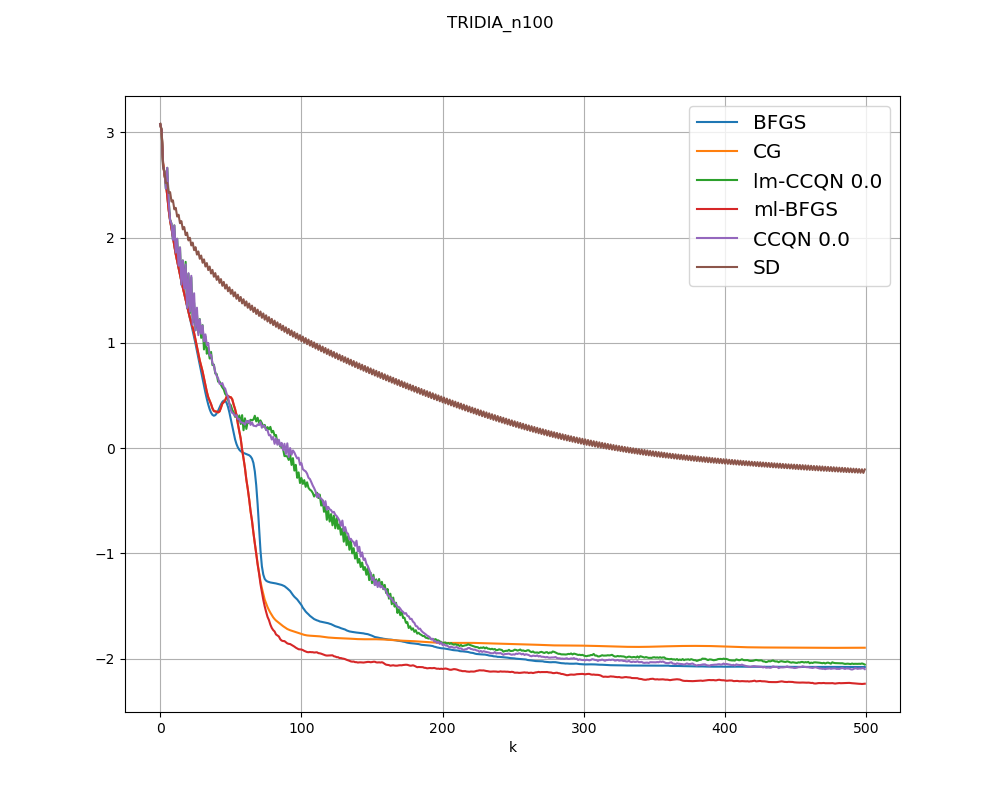}}}%
                \qquad
                \subfloat[\texttt{TESTQUAD\_n1000}]{{\includegraphics[width=6.5cm]{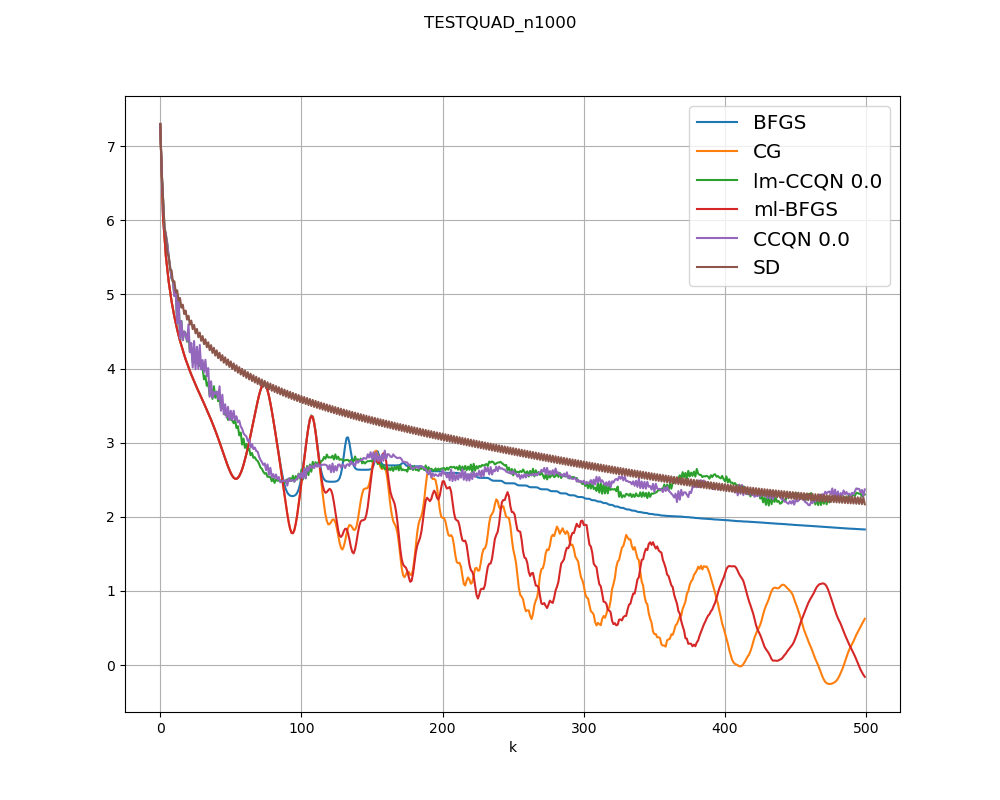}}}%
                \caption{Average log norm of the gradient at step $k$ for each tested method.}
\end{figure}

\begin{figure}[H]
                \centering
                \subfloat[\texttt{HILBERTB\_n50}]{{\includegraphics[width=6.5cm]{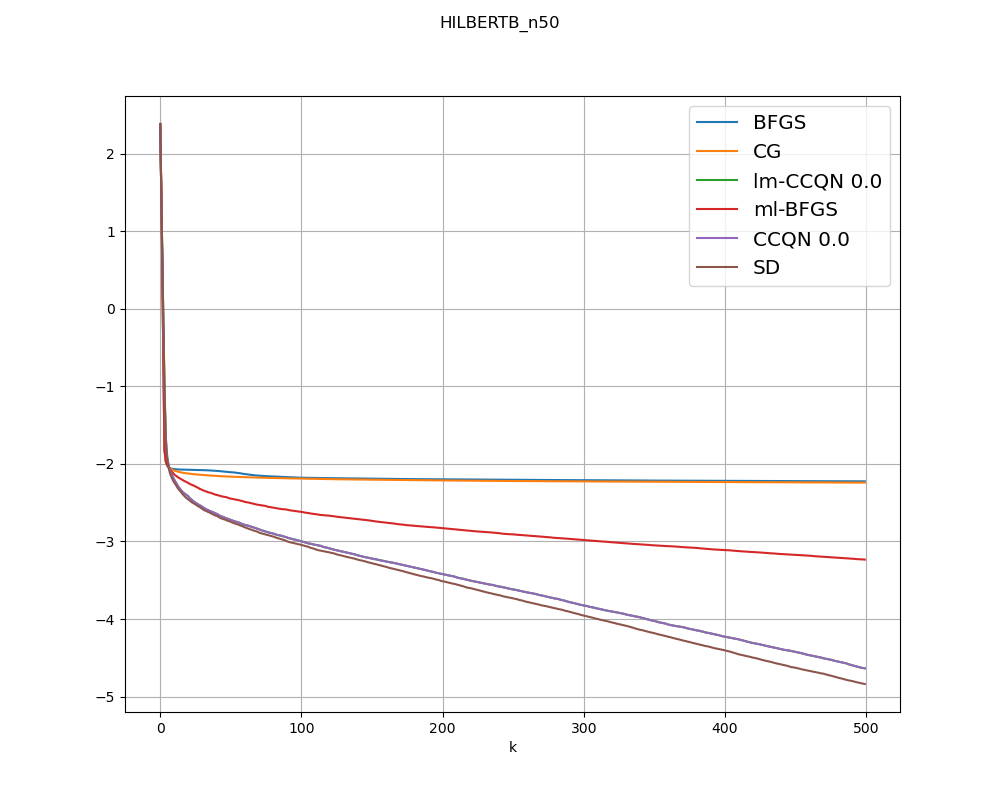}}}%
                \qquad
                \subfloat[\texttt{TOINTQOR\_n50}]{{\includegraphics[width=6.5cm]{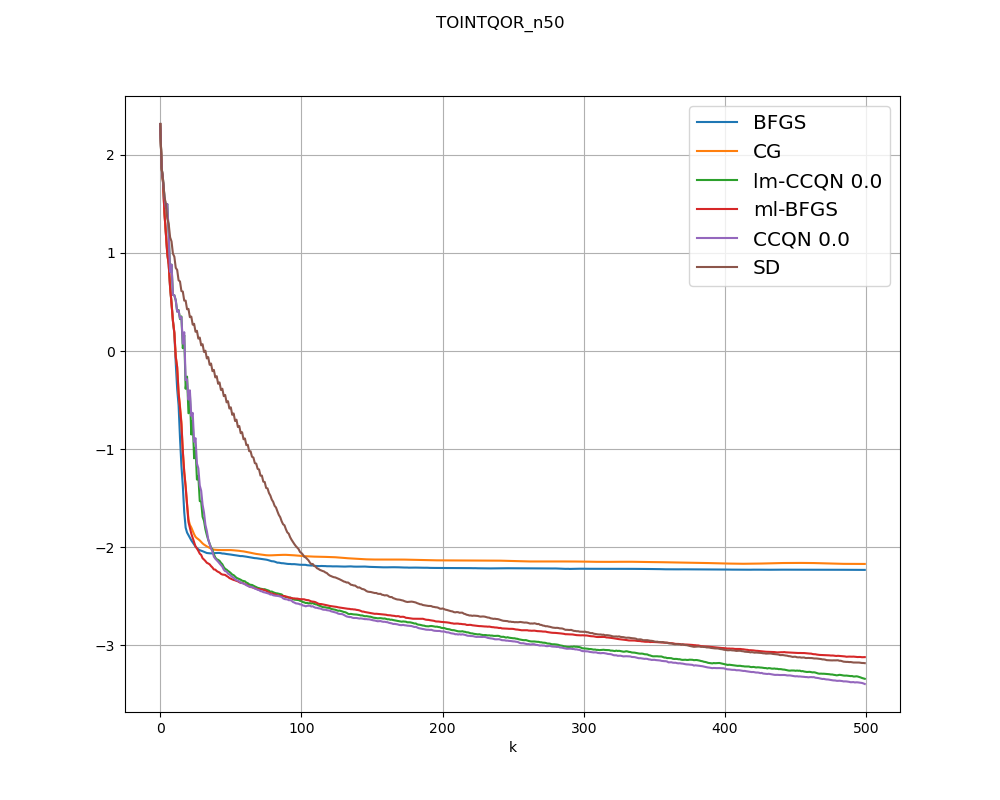}}}%
                \caption{Average log norm of the gradient at step $k$ for each tested method.}
\end{figure}


\end{document}